\numberwithin{equation}{section}
\newtheorem{maintheorem}{Theorem}
\newtheorem{maincoro}[maintheorem]{Corollary}
\newtheorem{mainconj}[maintheorem]{Conjecture}
\newtheorem*{conjecture*}{Conjecture}
\newtheorem{theorem}{Theorem}[section]
\newtheorem*{theorem*}{Theorem}
\newtheorem{lemma}[theorem]{Lemma}
\newtheorem{claim}[theorem]{Claim}
\newtheorem{proposition}[theorem]{Proposition}
\newtheorem{corollary}[theorem]{Corollary}
\newtheorem*{definition*}{Definition}
\theoremstyle{definition}{

\newtheorem*{remark*}{Remark}
}
\newcommand{\R}{\mathbb R}
\newcommand{\Z}{\mathbb Z}
\newcommand{\E}{\mathbb{E}}
\renewcommand{\P}{\mathbb{P}}
\DeclareMathOperator{\var}{Var}
\renewcommand{\epsilon}{\varepsilon}
\newcommand{\one}{\mathbbm{1}}
\newcommand{\cD}{\mathcal{D}}
\newcommand{\cL}{\mathcal{L}}
\newcommand{\cF}{\mathcal{F}}
\newcommand{\given}{\, \big| \,}
\newcommand{\tr}{\textsc{t}}
\newcommand{\allone}{\boldsymbol{1}}
\newcommand{\vol}{\operatorname{vol}}
\date{}
\begin{document}
\title{Spectra of lifted Ramanujan graphs}

\author{Eyal Lubetzky}
\address{Eyal Lubetzky\hfill\break
Microsoft Research\\
One Microsoft Way\\
Redmond, WA 98052, USA.}
\email{eyal@microsoft.com}
\urladdr{}

\author{Benny Sudakov}
\address{Benny Sudakov\hfill\break
Department of Mathematics\\
UCLA\\
Los Angeles, CA 90095, USA.}
\email{bsudakov@math.ucla.edu}
\urladdr{}
\thanks{\hspace{-0.25cm}\makebox[\textwidth][l]{
B.\ Sudakov is supported by NSF CAREER award 0812005 and a USA-Israeli BSF grant.}}

\author{Van Vu}
\address{Van Vu\hfill\break
Department of Mathematics\\
Rutgers\\
Piscataway, NJ 08854, USA.}
\email{vanvu@math.rutgers.edu}
\urladdr{}
\thanks{\hspace{-0.25cm}
V.\ Vu is supported by research grants DMS-0901216 and AFOSAR-FA-9550-09-1-0167.}

\begin{abstract}
A random $n$-lift of a base graph $G$ is its cover graph $H$ on the vertices $[n]\times V(G)$, where for each edge $u v$ in $G$ there is an independent uniform bijection $\pi$, and $H$ has all edges of the form $(i,u),(\pi(i),v)$. A main motivation for studying lifts is understanding Ramanujan graphs, and namely whether typical covers of such a graph are also Ramanujan.

Let $G$ be a graph with largest eigenvalue $\lambda_1$ and let $\rho$ be the spectral radius of its universal cover. Friedman (2003) proved  that every ``new'' eigenvalue of a random lift of $G$ is $O(\rho^{1/2}\lambda_1^{1/2} )$ with high probability, and conjectured a bound of $\rho+o(1)$, which would be tight by results of Lubotzky and Greenberg (1995).
Linial and Puder (2008) improved Friedman's bound to $O(\rho^{2/3}\lambda_1^{1/3})$.
For $d$-regular graphs, where $\lambda_1=d$ and $\rho=2\sqrt{d-1}$, this translates to a bound of $O(d^{2/3})$, compared to the conjectured $2\sqrt{d-1}$.

Here we analyze the spectrum of a random $n$-lift of a $d$-regular graph whose nontrivial eigenvalues are all at most $\lambda$ in absolute value.
We show that with high probability the absolute value of every nontrivial eigenvalue of the lift is $O((\lambda \vee \rho) \log \rho)$.
This result is tight up to a logarithmic factor, and for $\lambda \leq d^{2/3-\epsilon}$ it substantially improves the above upper bounds of Friedman and of Linial and Puder. In particular, it implies that a typical $n$-lift of a Ramanujan graph is nearly Ramanujan.
\end{abstract}

\maketitle

\vspace{-0.5cm}

\section{Introduction}


Over the last quarter of a century, \emph{expander} graphs have played a vital role in a remarkable variety of areas, ranging from combinatorics to discrete geometry to theoretical computer science, while exhibiting deep connections to algebra and number theory.
Notable applications of expanders, to name just a few, include the design of efficient communication networks, explicit error-correcting codes with efficient encoding and decoding schemes, derandomization of randomized algorithms, compressed sensing and the study of metric embeddings. See the expository article of Sarnak~\cite{Sarnak} on these intriguing objects, as well as the comprehensive survey of Hoory, Linial and Wigderson~\cite{HLW} demonstrating their many applications.

Informally, an expander is a graph where every small subset of the vertices has a relatively large edge boundary (see Section~\ref{sec:intro-exp} for a formal definition).
Most applications utilize $d$-regular sparse expanders ($d \geq 3$ fixed), where it is well-known that expansion is related to the ratio between $d$ and $\lambda$, the second largest eigenvalue in absolute value of the adjacency matrix. The smaller $\lambda$ is, the better the graph expansion becomes.
As a consequence of the Alon-Boppana bound~\cite{Nilli} (see also~\cite{Friedman2}) $\lambda \geq 2\sqrt{d-1}-o(1)$ where the $o(1)$-term tends to $0$ as the graph size tends to $\infty$. Graphs for which $\lambda \leq 2\sqrt{d-1}$ are in that respect optimal expanders and are called \emph{Ramanujan} graphs.

A proof that $d$-regular expanders  exist for any $d\geq 3$ was given by Pinsker~\cite{Pinsker} in the early 70's via a simple probabilistic argument. However, constructing good expanders \emph{explicitly} is far more challenging and particularly important in applications (see \cite{RVW} and the references therein), a task that was first achieved by Margulis~\cite{Margulis1}. Thereafter Ramanujan graphs were constructed explicitly in the seminal works of Lubotzky-Phillips-Sarnak~\cite{LPS} and Margulis~\cite{Margulis2}, relying on deep number theoretic facts. Till this date Ramanujan graphs remain mysterious: Not only are there very few constructions for such graphs, but for instance it is not even known whether they exist for any $d\geq 3$. A striking result of Friedman~\cite{Friedman08} shows that almost every $d$-regular graph on $n$ vertices is \emph{nearly} Ramanujan --- it has $\lambda=2\sqrt{d-1}+o(1)$ (the $o(1)$-term tends to $0$ as $n\to\infty$). What proportion of these graphs satisfy $\lambda \leq 2\sqrt{d-1}$ remains
an intriguing open problem.

The useful connection between expanders and the topological notion of \emph{covering maps} was
extensively studied by many authors over the last decade. Various properties of random covers of a given graph were thoroughly examined (see e.g.\ \cites{AL1,AL2,BL,LP}), motivated in part by the problem of generating good (large) expanders from a given one.

Given two simple graphs $G$ and $H$, a \emph{covering map} $\pi:V(H)\to V(G)$ is a homomorphism that for every $x\in V(H)$ induces a bijection between the edges incident to $x$ and those incident to $\pi(x)$. In the presence of such a covering map we say that $H$ is a \emph{lift} (or a cover) of $G$, or alternatively that $G$ is a \emph{quotient} of $H$. The \emph{fiber} of $y\in V(G)$ is the set $\pi^{-1}(y)$, and if $G$ is connected then all fibers are of the same cardinality, the \emph{covering number}.

 One well-known connection between covers and expansion is the fact that the universal cover of any $d$-regular graph is the infinite $d$-regular tree $\mathbb{T}_d$, whose spectral radius is $\rho=2\sqrt{d-1}$, the eigenvalue threshold in Ramanujan graphs. In fact, Greenberg and Lubotzky~\cite{Greenberg} (cf.\  \cite{Lubotzky}*{Chapter 4}) extended the Alon-Boppana bound to any family of general graphs in terms of the spectral radius of its universal cover (also see \cite{Friedman1}*{Theorem~4.1}).

It is easy to see that any lift of a $d$-regular base graph $G$ is itself $d$-regular and inherits all the original eigenvalues of $G$. One hopes that the lift would also inherit the expansion properties of its base graph, and in particular that almost every cover of a (small) Ramanujan graph will also be Ramanujan.

Since our focus here is on lifts of Ramanujan graphs (regular by definition) we restrict our attention to base graphs that are $d$-regular for $d\geq 3$.

\begin{figure}
\begin{center}
\subfigure{\includegraphics[width=3in]{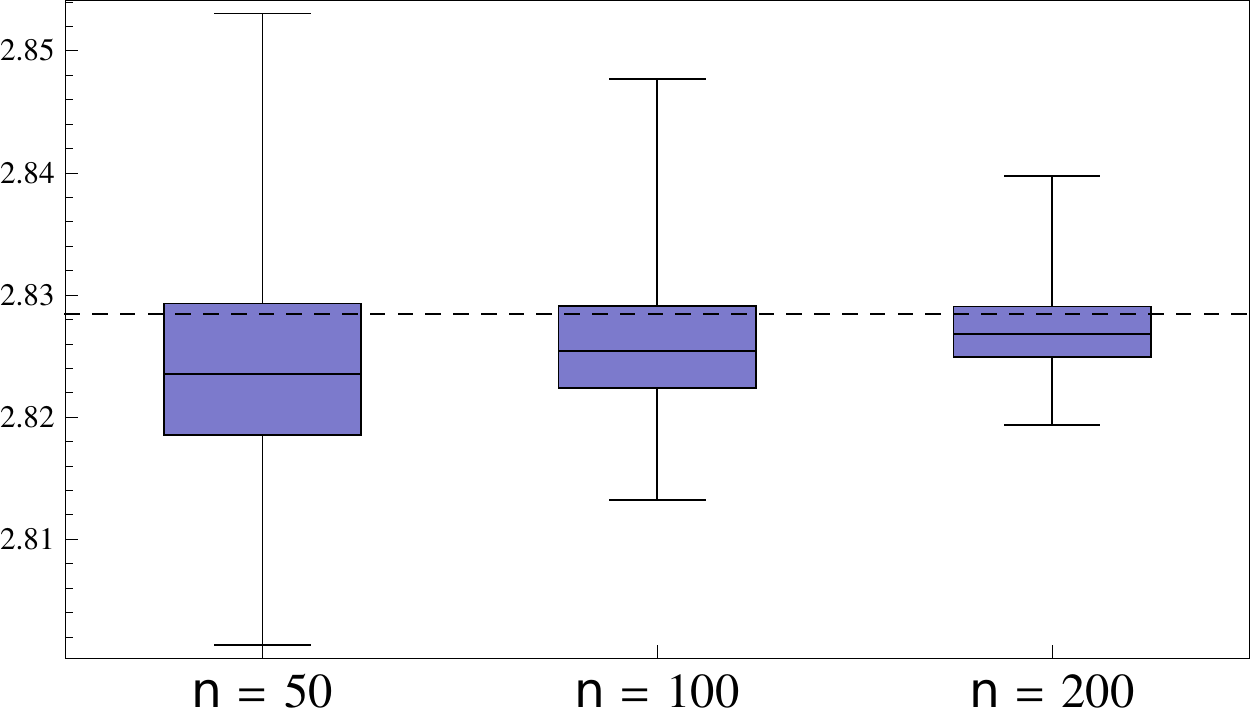}}
\hspace{0.1in}
\subfigure{\includegraphics[width=1.75in]{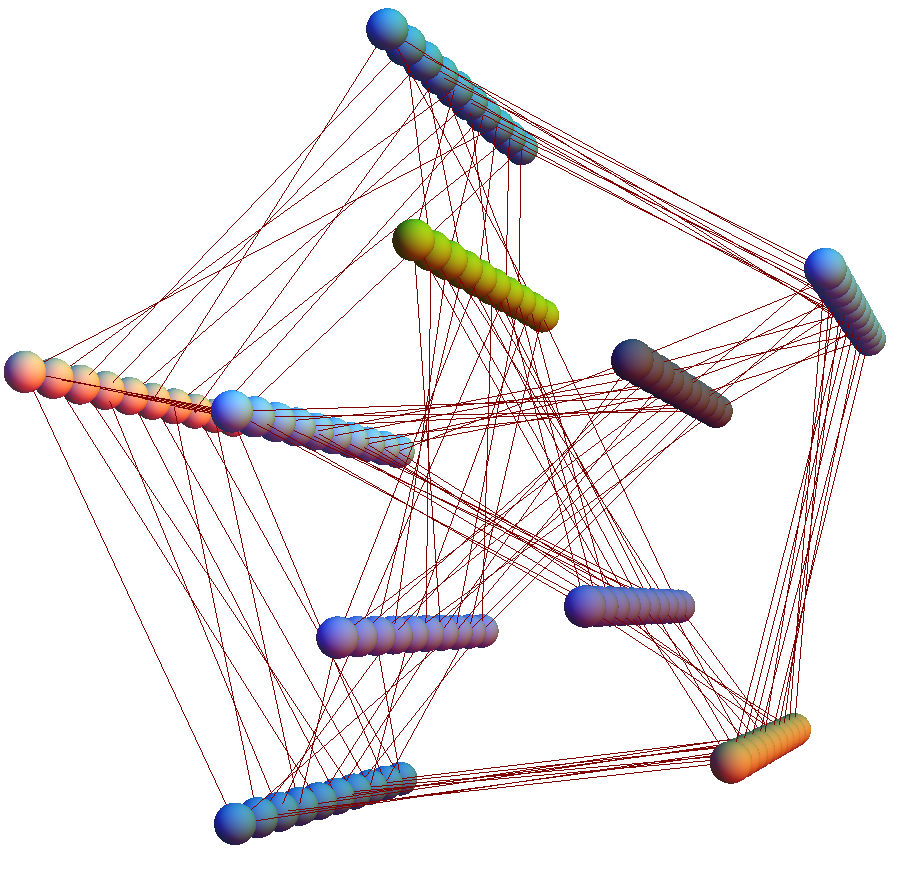}}
\end{center}
\vspace{-0.1in}
\caption{
Second eigenvalue (in absolute value) of a lifted Petersen graph, a 3-regular Ramanujan graph on 10 vertices,
simulated for covering number $n\in\{50,100,200\}$.
Dashed line marks the Ramanujan threshold $2\sqrt{2}$.
Boxes span values from the $\frac14$-quantile to the $\frac34$-quantile out of 1000 lifts.
\label{fig:liftpet}}
\end{figure}

A random uniform $n$-lift of a base graph $G$ (a uniform cover of $G$ with covering number $n$) has the following convenient description: It is the graph $H$ on the vertices $[n]\times V(G)$, where for each edge $u v$ in $G$ there is an independent uniform bijection $\pi$, and $H$ has all edges of the form $(i,u),(\pi(i),v)$.
The random lift of a complicated base-graph is thus a hybrid between the complex geometry of the quotient and the randomness due to the bijections.

In an important development in the study of the spectrum of random lifts Friedman~\cite{Friedman1} showed in 2003 that with high probability (w.h.p.) every ``new'' eigenvalue of an $n$-lift (one that is not inherited from the base graph) is at most $\sqrt{\rho \lambda_1}+o(1)$, where $\lambda_1$ is the largest eigenvalue of the base-graph and $\rho$ is the spectral-radius of its universal cover. When the base-graph $G$ is $d$-regular, $\rho=2\sqrt{d-1}$ and Friedman's result implies that in its random $n$-lift $H$ the largest absolute value of all nontrivial eigenvalues is w.h.p.
\begin{equation}
  \label{eq-Fri}
\lambda(H) \leq \lambda(G) \,\vee\, O(d^{3/4})\,,
\end{equation}
where $(a\vee b)$ denotes $\max\{a,b\}$. Conversely, $\lambda(H) \geq \lambda(G)$ and by Alon-Boppana it is also at least $2\sqrt{d-1} - o(1)$. This lower bound was conjectured by Friedman~\cite{Friedman1} to be tight (for general graphs he conjectured that all new eigenvalues are at most $\rho+o(1)$ as in the Greenberg-Lubotzky bound).

In a recent paper~\cite{LP}, Linial and Puder were able to significantly improve Friedman's bound and show that w.h.p.\ all the new eigenvalues of $H$ are at most $O(\rho^{2/3} \lambda_1^{1/3})$. Consequently, an $n$-lift $H$ of a $d$-regular $G$ w.h.p.\ satisfies
\begin{equation}
  \label{eq-LP}
  \lambda(H) \leq \lambda(G) \,\vee\, O(d^{2/3})\,.
\end{equation}
When $G$ is a $d$-regular expander with nontrivial eigenvalues of $O(\sqrt{d})$ as is the case for Ramanujan graphs, this translates to $c \sqrt{d} \leq \lambda(H) \leq O(d^{2/3})$.

Our main result in this work is the new near optimal upper bound of $ O((\lambda \vee \rho)\log\rho)$ when $G$ is $d$-regular with all nontrivial eigenvalues at most $\lambda$ in absolute value.
For $\lambda\leq d^{2/3-\epsilon}$ it substantially improves the known bounds \eqref{eq-Fri},\eqref{eq-LP}, and when $\lambda=O(\sqrt{d})$ as in Ramanujan graphs it is tight up to a logarithmic factor, giving $c\sqrt{d} \leq \lambda(H) \leq O(\sqrt{d}\log d)$.

\begin{maintheorem}\label{thm-1}
Let $G$ be a $d$-regular graph with all nontrivial eigenvalues at most $\lambda$ in absolute value and let $\rho=2\sqrt{d-1}$ be the spectral radius of its universal cover.
Let $H$ be a random $n$-lift of $G$. For some explicit absolute constant $C>0$,
every nontrivial eigenvalue of $H$ is at most $C(\lambda\vee\rho) \log \rho$ in absolute value
except with probability $O(n^{-100})$.
\end{maintheorem}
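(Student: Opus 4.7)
\medskip
\noindent\textbf{Proof plan.}
The approach is the trace (moment) method, applied to a matrix $B$ whose spectrum is precisely the set of new eigenvalues of $H$. For each edge $uv\in E(G)$ let $\hat\Pi_{uv}:=P_{\pi_{uv}}-\tfrac{1}{n}J$ be the permutation matrix of $\pi_{uv}$ projected onto the orthogonal complement $W$ of $\allone\in\R^n$. Let $B$ be the block matrix indexed by $V(G)\times V(G)$ whose $(u,v)$-block equals $\hat\Pi_{uv}$ if $uv\in E(G)$ and vanishes otherwise. Since the inherited eigenvalues of $A(H)$ live on vectors of the form $\allone\otimes v_G$, which have been removed, $\|B\|$ equals the largest absolute value of a new eigenvalue. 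By Markov's inequality it suffices to prove
\begin{equation*}
\E\bigl[\operatorname{tr}(B^{2k})\bigr]\;\leq\;n^{O(1)}\bigl(C(\lambda\vee\rho)\log\rho\bigr)^{2k}
\end{equation*}
for $k=\lceil c\log n\rceil$ with $c$ sufficiently large; this yields the claimed $O(n^{-100})$ bound.

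Expanding the trace produces a double sum over closed walks $W=(v_0,\dots,v_{2k}=v_0)$ in $G$ together with lift-coordinate sequences $(i_0,\dots,i_{2k-1})\in[n]^{2k}$. The expectation factorizes over the edges of $G$, and the key probabilistic input is that for a uniformly random permutation $\pi$ on $[n]$, the moment $\E\prod_s(\hat\Pi)_{a_s,b_s}$ is negligible unless each edge $e\in E(G)$ is traversed an even number of times with a matched pairing of lift-indices at its endpoints. This forces the underlying multigraph of $W$ to have at most $k$ distinct edges and admit an Eulerian pairing, so that contributing walks are essentially tree-like in structure.

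After performing the expectation, the problem reduces to counting such paired closed walks in $G$ with an appropriate weight and bounding them by spectral quantities. Using a F\"uredi--Koml\'os-style encoding, decompose each walk into a non-backtracking skeleton together with tree excursions hanging off the skeleton. The tree excursions correspond to walks in the universal cover $\mathbb{T}_d$ and contribute factors of $\rho^{2\ell}$ (since $\rho$ is the spectral radius of $\mathbb{T}_d$). The non-backtracking closed walks in $G$ between specified vertices are enumerated via the non-backtracking operator of $G$, whose spectrum is controlled through the Ihara relation in terms of the adjacency eigenvalues, and restricted to the orthogonal complement of the Perron direction contributes factors of at most $(\lambda\vee\rho)^{2m}$. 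Combining the two gives, for each walk class, a bound of $(\lambda\vee\rho)^{2k}$ times a shape-counting factor, and summing over the allowed shapes produces a geometric series whose total cost is the promised $(\log\rho)^{2k}$ overhead.

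The main obstacle is the treatment of \emph{tangles}---walk segments that repeatedly revisit a short subgraph of $G$, for which the skeleton-plus-excursion decomposition breaks down and the Ihara-type bound is too crude. Such tangled walks must be bounded separately using the fact that their probability of contributing to the trace is small (polynomially in $n^{-1}$ per excess repetition, coming from the random bijection being pinned at several preimages). This trade-off both fixes the choice $k=\Theta(\log n)$ and is the source of the $\log\rho$ overshoot: one balances the polynomial penalty incurred from restricting to non-tangled walks against the exponential growth $(\lambda\vee\rho)^{2k}$ of the benign contribution. Once the moment bound on $\E\operatorname{tr}(B^{2k})$ is established, Markov's inequality with $k=c\log n$ concludes Theorem~\ref{thm-1}.
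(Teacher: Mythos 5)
Your proposal uses Wigner's trace (moment) method applied to the restriction of $A_H$ to the new-eigenvector subspace, decomposing closed walks into a non-backtracking skeleton plus tree excursions, controlling excursions by $\rho$ and the skeleton by an Ihara-type spectral argument, with a separate treatment of ``tangles.'' This is genuinely a different route from the paper: the paper explicitly avoids the trace method and instead adapts the Kahn--Szemer\'edi Rayleigh-quotient approach, splitting the bilinear form $x^\tr A_H y$ into ``heavy'' pairs (bounded via a cut lemma with two levels of dyadic decomposition, Proposition~\ref{prop-e(A,B)-lift-bound}) and ``light'' pairs (bounded via a Doob martingale, an $L^2$ increment estimate, Freedman's inequality, and an $\epsilon$-net). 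The paper even remarks that Friedman~\cite{Friedman1} and Linial--Puder~\cite{LP} both used the trace method and that the authors moved away from it precisely because the closed-walk combinatorics in lifts of an arbitrary expander is hard to control.

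There are, however, concrete gaps in the sketch that would need to be filled, and at least one point where I think the argument as written would not deliver the claimed bound. First, the claimed origin of the $\log\rho$ factor is not convincing. In a trace-method argument with $k=\Theta(\log n)$, balancing a polynomial-in-$n$ shape-counting penalty against $(\lambda\vee\rho)^{2k}$ gives a multiplicative $n^{O(1/k)}=1+o(1)$ correction, not a $\log\rho$ factor; and a geometric series over excursion lengths contributes an absolute constant per step, not $\log\rho$ per step. So either the analysis is done crudely and yields something substantially worse than $(\lambda\vee\rho)\log\rho$ (as indeed happened for Linial--Puder, who got $O(\rho^{2/3}\lambda_1^{1/3})$ by this method), or it is done with full Bordenave-type care and should give $(\lambda\vee\rho)(1+o(1))$ with no $\log$ at all. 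Second, the spectral control of the non-backtracking skeleton is hand-waved: the non-backtracking operator of $G$ is not normal, so bounding sums of its matrix entries over long walks by the Ihara eigenvalue bound is not immediate; one needs a quantitative bound on powers of a non-normal operator (e.g., via a similarity transform or a careful resolvent estimate), and this can cost polynomial factors in $m$. Third, the assertion that $\E\prod_s(\hat\Pi)_{a_s,b_s}$ vanishes unless every base edge is traversed an even number of times with matched lift-indices is too strong for permutation-matrix moments: the centered permutation matrix $P_\pi-\tfrac1nJ$ has non-vanishing higher moments with more subtle cancellation structure, and these lower-order terms are exactly where the bookkeeping becomes delicate. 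Finally, the ``tangle'' step you invoke is the technical heart of Bordenave's proof of Friedman's theorem and of the Bordenave--Collins extension to lifts, and it is not adequately captured by ``polynomially small per excess repetition''; without a genuine treatment of tangles the trace method only recovers the Linial--Puder exponent. In short, the route you propose is viable and was later made to work (yielding a \emph{stronger} conclusion than Theorem~\ref{thm-1}), but the sketch as written is missing its hardest steps and, if completed naively, would not produce the $\log\rho$ factor in the statement.
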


\begin{maincoro}\label{cor-2}
  Let $G$ be a $d$-regular Ramanujan graph vertices and let $H$ be a random $n$-lift of $G$. With probability $1-O(n^{-100})$ every nontrivial eigenvalue of $H$ is at most $C \sqrt{d}\log d$ in absolute value, where $C>0$ is an explicit absolute constant.
\end{maincoro}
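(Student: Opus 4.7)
The plan is to derive the corollary as a direct application of Theorem~\ref{thm-1}, since the Ramanujan hypothesis on $G$ forces the parameter $\lambda$ appearing in that theorem to be controlled by $\rho$.

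First I would invoke the definition of a Ramanujan graph: every nontrivial eigenvalue of $G$ is at most $2\sqrt{d-1}$ in absolute value. Thus in the statement of Theorem~\ref{thm-1} we may take $\lambda \leq 2\sqrt{d-1} = \rho$, so that $\lambda \vee \rho = \rho = 2\sqrt{d-1}$. For $d \geq 3$ this yields $\rho \leq 2\sqrt{d}$ and $\log \rho \leq \log(2\sqrt{d}) \leq C_1 \log d$ for an absolute constant $C_1$.

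Next I would substitute into the conclusion of Theorem~\ref{thm-1}. With probability $1 - O(n^{-100})$, every nontrivial eigenvalue of $H$ is at most
\[
C(\lambda \vee \rho)\log \rho \;\leq\; C \cdot 2\sqrt{d} \cdot C_1 \log d \;=\; C' \sqrt{d}\log d
\]
in absolute value, where $C' = 2 C C_1$ is again an explicit absolute constant. Renaming this as the constant $C$ in the statement of Corollary~\ref{cor-2} gives the claimed bound.

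There is no substantive obstacle here, since the content of the corollary is entirely contained in Theorem~\ref{thm-1}; the only work is checking that the Ramanujan assumption collapses the $\lambda \vee \rho$ term to $\rho$ and that $\log \rho = O(\log d)$, both of which are immediate from the definitions.
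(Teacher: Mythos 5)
Your derivation is correct and is exactly the intended one: the paper states Corollary~\ref{cor-2} without a separate proof precisely because it follows by substituting the Ramanujan bound $\lambda \leq 2\sqrt{d-1}=\rho$ into Theorem~\ref{thm-1} and absorbing the resulting constants, which is what you do.
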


Note that the above corollary implies that typical random $n$-lifts of Ramanujan graphs are nearly Ramanujan.
No attempt was made to optimize the explicit constant in Theorem~\ref{thm-1}. Finally, the statement of Theorem~\ref{thm-1} holds even when the size of the base-graph $m$ is allowed to grow with $n$ provided that $n$ is large enough in comparison (e.g., $n\geq m^{3/2}$).



\subsection{Related work}
The previous bounds on the spectra of random $n$-lifts of a fixed graph $G$ due to Friedman~\cite{Friedman1} and Linial and Puder~\cite{LP} were both obtained via Wigner's trace method.
The fact that the universal cover $\mathbb{T}$ of a connected graph $G$ is the infinite tree of non-backtracking walks from an arbitrarily chosen vertex makes the trace method particularly useful for relating the new eigenvalues of the lift with $\rho$, the spectral-radius of $\mathbb{T}$.

Even when the geometry of a graph is very well understood, bounding its nontrivial eigenvalues can be extremely challenging. For instance, a line of papers (cf.\ \cites{BS,FKS,Friedman1,Friedman2}) established various bounds for the second eigenvalue of certain random regular graphs, culminating in the optimal bound $2\sqrt{d-1}+o(1)$ for a uniformly chosen $d$-regular graph on $n$ vertices, proved by Friedman~\cite{Friedman08} using highly sophisticated arguments.

It turns out that this model is essentially the special case of an $n$-lift of a graph comprising a single vertex with self-loops: It is easy to see that for $d$ even, the random $d$-regular graph obtained by $d/2$ independent uniform permutations in $S_n$ is equivalent to an $n$-lift of the base-graph $G$ that has a single vertex with $d/2$ loops (this model is in fact contiguous to the uniform random $d$-regular graph for $d\geq 4$, cf.\ e.g.~\cite{Wormald}).
Unfortunately, when the base-graph features a complex and rich structure (e.g.\ the LPS-expanders, whose expansion properties hinge on a deep theorem of Selberg) it becomes significantly harder to control the spectrum of its lifts. Indeed, there are many examples of geometric properties that have been pinpointed precisely for the random regular graph yet remain unknown for arbitrary expanders (see \cite{LS} for a recent such example). Estimating the number of closed walks in lifts of arbitrary Ramanujan graphs thus appears to be a formidable task.

In this work, the bounds obtained for the spectra of lifts of arbitrary expanders rely on an approach introduced by Kahn and Szemer\'edi~\cite{FKS}, which is quite different from Wigner's trace method. This approach was originally used to control the spectrum of a random regular graph, and several new ideas are required to adapt it to the more complicated geometry of the lifts considered here.

Another related problem in the study of spectra of lifts, yet of a rather different nature, considers the $2$-lift of a base-graph (rather than $n$-lifts of a small fixed graph). Bilu and Linial~\cite{BL} showed that for any $d$-regular graph $G$ there \emph{exists} a $2$-lift with all new eigenvalues at most $O(\sqrt{d \log^3 d})$. This was shown by means of the Lov\'asz Local Lemma, combined with the crucial observation of~\cite{BL} whereby the new eigenvalues correspond precisely to the eigenvalues of a \emph{signing} of the adjacency matrix of $G$ (the matrix obtained by replacing a subset of its $1$ entries by $-1$). In the absence of such a characterization when the covering number $n$ is large, different tools are needed for the problem studied here, where we seek a bound that holds for almost every $n$-lift with $n$ sufficiently large.

\subsection{The distribution of the second eigenvalue}
As stated above, while a random $d$-regular graph $G$ has second eigenvalue $\lambda(G) \leq 2\sqrt{d-1} + o(1)$ w.h.p.\ (the $o(1)$-term tending to $0$ as $|V(G)|\to\infty$), the probability that $G$ is Ramanujan is unknown. See~\cites{HLW,Sarnak} for some experimental results suggesting that this probability is bounded away from $0$ and $1$.
As this is essentially the simplest special case of a random lift (the quotient being a single vertex with self-loops), it is natural to conjecture the following:
\begin{mainconj}\label{conj-3}
For any Ramanujan graph $G$ there exist some $0 < c < 1$ such that its random $n$-lift $H$ satisfies $\P(\mbox{$H$ is Ramanujan}) = c +o(1)$, where the $o(1)$-term tends to $0$ as $n\to\infty$.
\end{mainconj}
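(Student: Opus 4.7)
The plan is to reduce Conjecture~\ref{conj-3} to a precise fluctuation statement for the extremal new eigenvalue of the random lift. By Theorem~\ref{thm-1}, every new eigenvalue has absolute value $O(\sqrt{d}\log d)$ with probability $1-O(n^{-100})$, so the main task is to pin down the asymptotic distribution of $\lambda_{\mathrm{new}}(H):=\max\{|\mu|:\mu\text{ a new eigenvalue of }H\}$ near the Ramanujan threshold $\rho=2\sqrt{d-1}$, and in particular to show that its fluctuations around $\rho$ straddle zero.

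First I would set up a framework that cleanly isolates the new spectrum. Generalising the Bilu--Linial observation for $2$-lifts, the new eigenvalues of an $n$-lift of $G$ form the spectrum of an operator $M_\pi$ of dimension $|V(G)|(n-1)$ built from the adjacency structure of $G$ twisted by the random bijections $\pi_{uv}$: concretely, $M_\pi$ is the restriction of $A_H$ to the orthogonal complement of the subspace of functions on $V(H)$ that are constant on each fibre. Working with $M_\pi$ directly eliminates the old eigenvalues from the picture and reduces the question to a random-permutation-twisted matrix ensemble.

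Next I would prove a quantitative central limit theorem for the traces $\operatorname{tr}(M_\pi^{2k})$ at scales $k\asymp \log n/\log d$, using the cycle structure of random permutations and a refined count of non-backtracking closed walks in $G$. For fixed $k$, the centred trace should converge to a Gaussian with a variance computable from the local geometry of $G$, paralleling known CLTs for random regular graphs. Pushing this CLT up to the relevant scale and taking a careful limit in the spirit of Soshnikov's moment method for Wigner matrices, one would hope to show that for some exponent $\alpha>0$ the rescaled quantity $(\lambda_{\mathrm{new}}(H)-\rho)\,n^{\alpha}$ converges in distribution to a non-degenerate law $F$ whose support meets both sides of the origin; the exact identity of $F$ (a Tracy--Widom-type law, possibly deformed by the local structure of $G$) is less important than the property $0<F(0)<1$. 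The conjecture would then follow with $c=F(0)$, since the old eigenvalues of $G$ are automatically bounded by $\rho$ under the Ramanujan hypothesis.

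The fluctuation theorem just outlined is of course the main obstacle: sparse, combinatorially structured ensembles such as random $n$-lifts have so far resisted Tracy--Widom analysis, and the special case where $G$ is a bouquet of $d/2$ self-loops---essentially the uniform random $d$-regular graph via the permutation model---is itself a well-known open problem. A more modest but still meaningful partial goal would be to establish only that $\limsup_n \P(H\text{ is Ramanujan})<1$, via a Paley--Zygmund argument based on a lower bound for $\var(\lambda_{\mathrm{new}}(H))$ that does not require identifying any limit law, together with a matching lower-tail estimate giving $\liminf_n \P(H\text{ is Ramanujan})>0$ through a truncated moment computation on $\operatorname{tr}(M_\pi^{2k})$ at the Ramanujan scale $k\asymp \log n/\log d$.
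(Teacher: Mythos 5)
This statement is Conjecture~\ref{conj-3}, and the paper offers \emph{no proof of it}: it is presented as an open problem, supported only by the numerical experiments in Figure~\ref{fig:lifts} and the surrounding discussion. There is therefore no argument in the paper against which your proposal can be compared.

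Your proposal, as you yourself acknowledge near the end, is a research programme rather than a proof. The decisive step---a quantitative fluctuation theorem placing $\lambda_{\mathrm{new}}(H)$ around $\rho=2\sqrt{d-1}$ at a well-defined scale $n^{-\alpha}$ with a non-degenerate limit law $F$ satisfying $0<F(0)<1$---is precisely what is missing, and it is genuinely out of reach with current techniques. Indeed, for the special case where $G$ is a bouquet of $d/2$ self-loops (the permutation model of random $d$-regular graphs), it is not even known whether $\P(\lambda\le 2\sqrt{d-1})$ converges, let alone to a constant strictly between $0$ and $1$; Friedman's theorem gives only $\lambda=2\sqrt{d-1}+o(1)$ w.h.p. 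Your reduction to the operator $M_\pi$ on the orthogonal complement of fibre-constant functions is a valid and standard way to isolate the new spectrum, but note that the specific Bilu--Linial signing characterisation you invoke is special to $2$-lifts; the paper explicitly points out that no such algebraic characterisation is available for large covering number $n$, which is part of why the problem is hard. The moment-method/Tracy--Widom analogy is the natural thing to hope for, but sparse, structured, permutation-driven ensembles like random lifts have resisted such edge-fluctuation analysis, and neither a CLT for $\operatorname{tr}(M_\pi^{2k})$ at scale $k\asymp\log n/\log d$ nor the variance lower bound you would need for a Paley--Zygmund argument is currently available. In short, the proposal correctly identifies why one might \emph{believe} the conjecture and what a proof would have to accomplish, but it does not supply the key estimates, and so there is a genuine gap exactly where you flag it.
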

Note that the limiting constant in the above conjecture depends on the base-graph $G$,
as it is plausible that its structure may affect the probability of being Ramanujan. For instance, a random cover of a complete graph on $d+1$ vertices might behave quite differently compared to lifts of a sparse $d$-regular Ramanujan graph.
However, as we next elaborate, experimental results lead us to suspect that up to normalization this is not the case.

\begin{figure}
\centering
\makebox[\textwidth][c]{
\subfigure[Fiber size $n = 100$]{\includegraphics[width=3.1in]{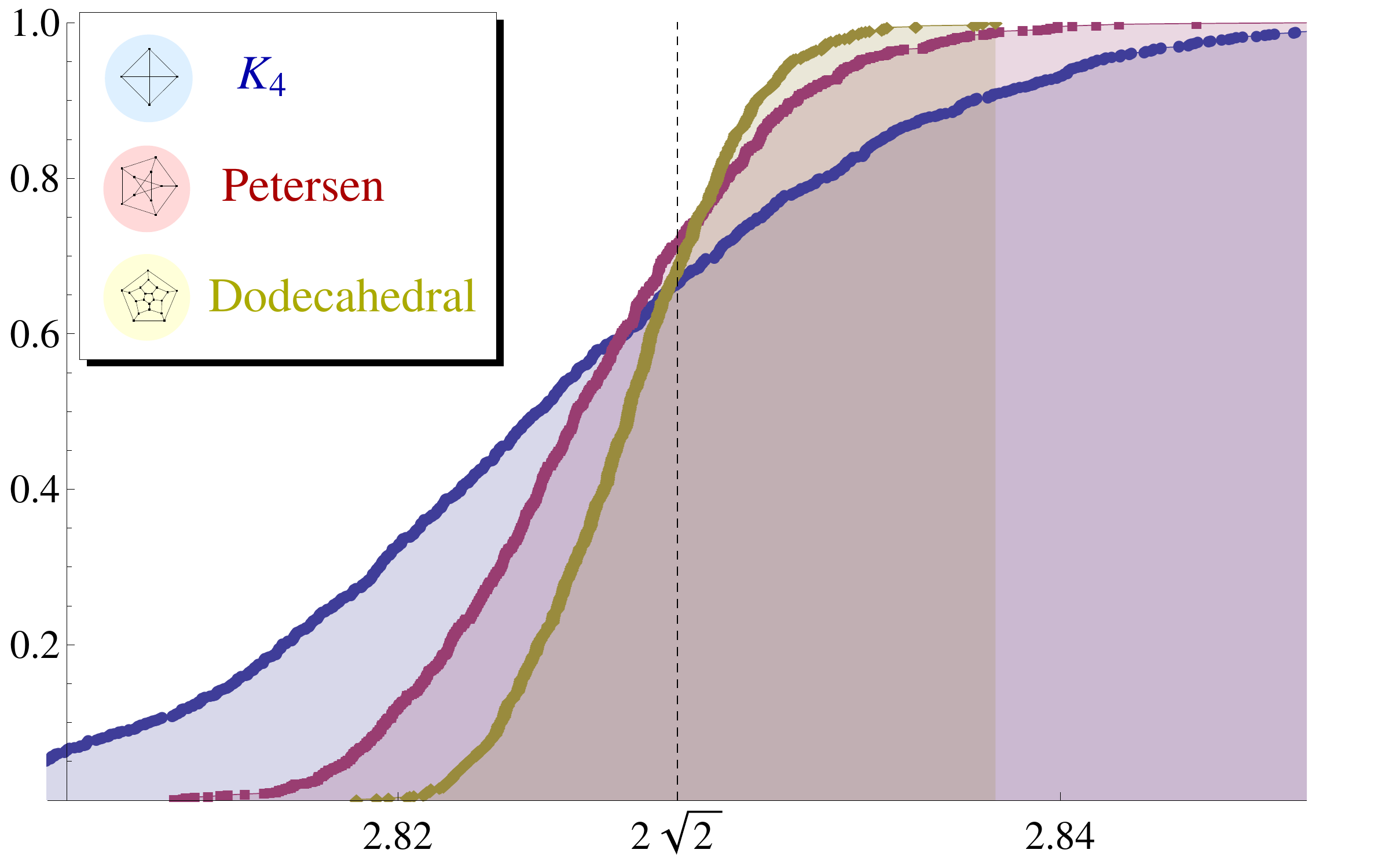}
    \label{fig:lifts:a}}
\hspace{-0.25in}
\subfigure[Total size $m n = 2000$]{\includegraphics[width=3.1in]{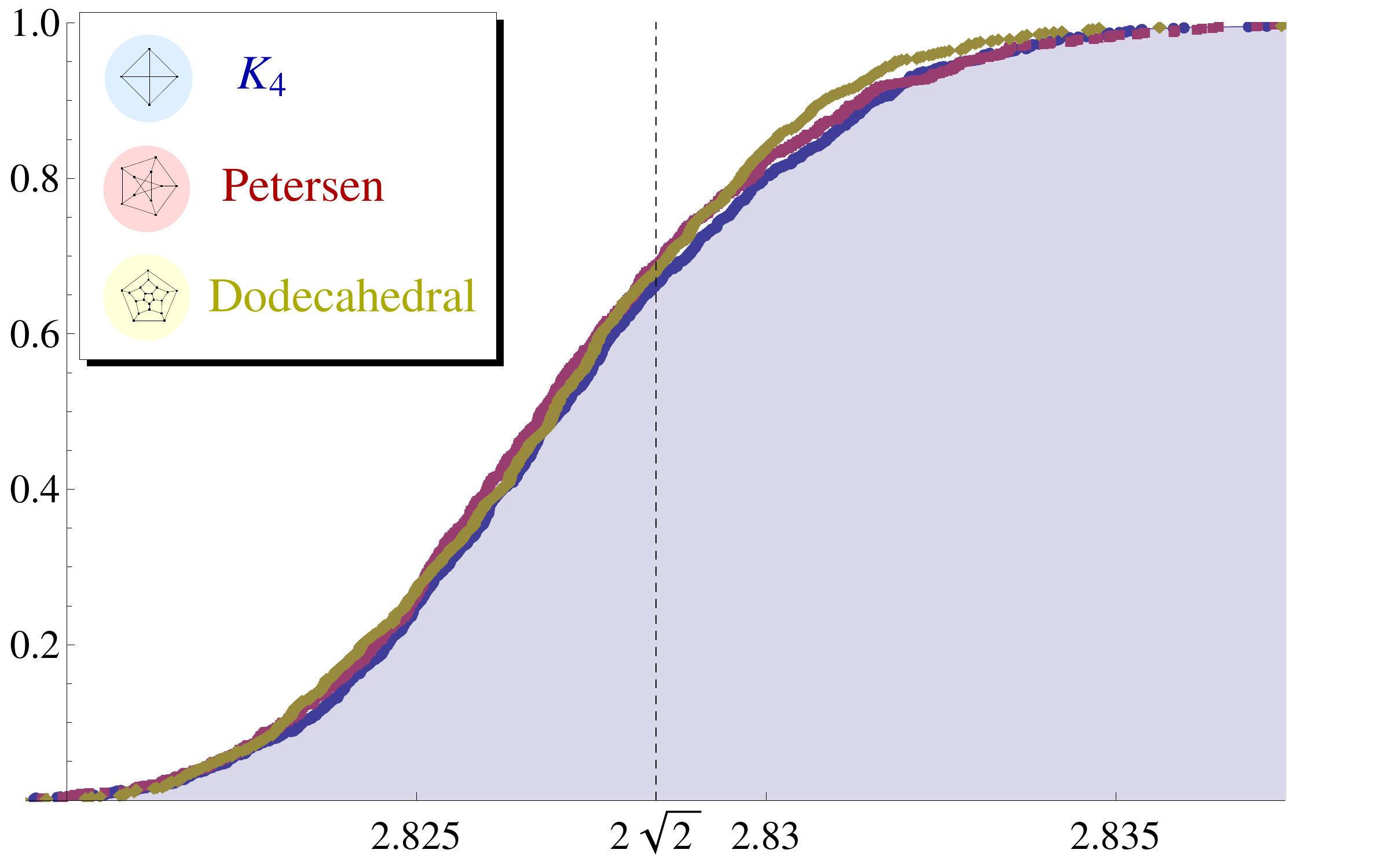}
    \label{fig:lifts:b}}
}
\vspace{-0.1in}
\caption{Empirical estimates for the c.d.f.\ of the second (in absolute value) eigenvalue of $n$-lifts of 3-regular Ramanujan graphs on $m$ vertices (1000 lifts were simulated per graph):
$K_4$ (complete graph on $m=4$ vertices), the
Petersen graph ($m=10$) and the Dodecahedral graph ($m=20$).
The c.d.f.'s coincide when aligning the total graph size. The probability for being strictly Ramanujan ($\lambda \leq 2\sqrt{2}$) is here roughly $2/3$.
\label{fig:lifts}}
\vspace{-0.15in}
\end{figure}

Figure~\ref{fig:lifts:a} shows the cumulative distribution function (c.d.f.) of $\lambda(H)$ where $H$ is the $100$-lift of 3 different $3$-regular Ramanujan base-graphs: $K_4$ (complete graph on $4$ vertices), the $10$-vertex Petersen graph and the $20$-vertex Dodecahedral graph. Each curve was evaluated from $1000$ random lifts.
In these simulations, the probability of a random lift being Ramanujan for each of these three base-graphs was bounded between $\frac35$ and $\frac45$.

Somewhat surprisingly, aligning the number of vertices of the graph cover $H$ to be the same (via $100$-lifts of the Dodecahedral graph, $200$-lifts of the Petersen graph and $500$-lifts of $K_4$, giving $2000$-vertex covers for each graph) resulted in the curves of the individual c.d.f.'s coinciding fairly accurately. This is demonstrated in Figure~\ref{fig:lifts:b}, in light of which we speculate that the following stronger version of the statement of Conjecture~\ref{conj-3} holds.

First, it seems plausible that for any integer $d \geq 3$ the limiting distribution of the second eigenvalue of the random cover is independent of the base-graph. Namely, there exists a distribution
$\mu_d$ on $[0,d]$ such that for any $d$-regular Ramanujan graph $G$ on $m$ vertices, the distribution of $\lambda(H)$ for its random $n$-lift $H$ converges to $\mu_d$ as $n\to\infty$. Second, the strong fit between the curves after aligning the total graph sizes suggests that even the rate of convergence to $\mu_d$ depends on $m n$ rather than on the geometry of the base-graph or even its relative size (in case $m$ is allowed to depend to $n$). Of-course, one clearly needs some level of ``burn-in'' for the covering number $n$ compared to $m$ since the cover $H$ starts as Ramanujan at $n=1$. For example, it may be that for any $n\geq m$ the total-variation distance between the distribution of $\lambda(H)$ and $\mu_d$ decays as a function of $mn$ alone, namely that $\| \P(\lambda(H) \in \cdot) - \mu_d\|_\mathrm{TV} \leq \alpha_d(m n)$ where $\alpha_d(k)$ depends only on $d,k$ and tends to $0$ as $k\to\infty$.

\section{Preliminaries and outline of the proof}

\subsection{Combinatorial vs.\ algebraic expanders}\label{sec:intro-exp}
The base-graph $G$ from Theorem~\ref{thm-1} corresponds to the algebraic definition of an expander known as an $(m,d,\lambda)$-graph.
An alternative closely-related criterion is the traditional definition of an expander graph in terms of its combinatorial edge or vertex expansion.
Let $G$ be a $d$-regular graph on $m$ vertices. The Cheeger constant of $G$ (also referred to as the edge isoperimetric constant) is defined as
\[ h(G) = \min_{\emptyset \neq S \subsetneqq V} \frac{|\partial S|}{|S|\;\wedge\; |V\setminus S|}\,,\]
where $(a\wedge b)$ denotes $\min\{a,b\}$ and $\partial S$ is the set of edges with exactly one endpoint in $S$. We say that $G$ is a $c$-edge-expander for some fixed $c > 0$ if it satisfies $h(G) > c$. Similarly, one defines
a $c$-vertex-expander by replacing $\partial S$ with the vertex boundary.

For $G$ as above the eigenvalues of the corresponding adjacency matrix are \[d=\lambda_1 \geq \lambda_2 \geq\ldots \geq \lambda_m \geq -d\] by Perron-Frobenius. We say that $G$ is an $(m,d,\lambda)$-graph if $|\lambda_i| \leq \lambda$ for all $i \neq 1$.
This notion was introduced by Alon in the 1980's, motivated by the fact that when $\lambda$ is much smaller than $d$ such graphs exhibit strong pseudo-random properties, resembling a random graph with edge density $d/m$. A notable example of this is captured by the \emph{Expander Mixing Lemma}: if $A,B$ are (not necessarily disjoint) subsets of vertices of an $(m,d,\lambda)$-graph then \begin{equation}
  \label{eq-exp-mixing}
\bigg| e(A,B) - \frac{d}{m} |A||B| \bigg| \leq \lambda\sqrt{ |A| |B|}\,,
\end{equation}
where $e(A,B) = \#\{(a,b):a \in A\,, b\in B\,, a b \in E(G)\}$ (\cite{AS}*{Chapter 9}).

Relating the above two notions of expansion is the following well-known discrete analogue of Cheeger's inequality bounding the first eigenvalue of a Riemannian manifold (Alon~\cite{Alon}, Alon-Milman~\cite{AM}, Dodziuk~\cite{Dodziuk}, Jerrum-Sinclair~\cite{JS}):
\[ \frac{d-\lambda}{2} \leq h(G) \leq \sqrt{2d(d-\lambda)}\]
See the survey \cite{HLW} for further information on expanders.

\subsection{Outline of the proof} We begin by describing the Kahn-Szemer\'edi \cite{FKS} approach for obtaining an $O(\sqrt{d})$ bound for random $d$-regular graphs.
Following Broder and Shamir \cite{BS}, the actual random graph model studied by \cite{FKS} is the $2d$-regular graph obtained from the union of $d$ permutations, contiguous to the lift of a single vertex with $d$ loops.

Let $H$ be the random graph in mention and let $A_H$ denote its adjacency matrix. By the Rayleigh quotient principle, the second (in absolute value) eigenvalue of the graph $H$ can be written as
\[ \lambda(H) = \max_{\substack{\|x\|= 1 \\ \left<x,\allone\right>=0}} \left| x^\tr A_H \, x\right| \,,\]
where $\allone=(1,\ldots,1)$ is the trivial eigenvector of $A_H$.
To bound $\lambda(H)$, the authors of \cite{FKS} analyzed the maximal possible value of $|x^\tr A_H \,y|$ separating the contribution of the pairs $x_i,y_j$ to the bilinear form into two cases:

\begin{enumerate}
\item \emph{Heavy pairs}: the contribution from those pairs
$x_i, y_j$ where $|x_i y_j|$ is suitably large. Here it is shown that w.h.p.\ the total contribution to $x^\tr A_H \,y$ by any pair of unit vectors $x,y \in \allone^\bot$ is at most $O(\sqrt{d})$.

\item \emph{Light pairs}: the remaining pairs $x_i,y_j$. Here it was shown that two fixed vectors $x,y$ are unlikely to contribute more than $O(\sqrt{d})$ to the bilinear form, and an $\epsilon$-net argument was used to extend this result to any unit vectors $x,y\in \allone^\bot$.
\end{enumerate}

Adapting this method to lifts of general graphs requires several additional ingredients.
Even in the simpler setting of \cite{FKS}, some of the arguments are only sketched and might prove difficult to complete in detail. More crucially, in our case we have little knowledge of the base-graph $G$, hence the study of both the ``heavy'' and ``light'' parts becomes significantly more involved.

First, our only input on $G$ is the magnitude of its second eigenvalue, which turns the analysis of the heavy part into a delicate optimization problem, requiring two levels of dyadic expansions of the potential contributions to the final bilinear form.

Second, the consideration of the light part relies on a non-trivial martingale argument which may be useful in other applications: In the absence of sufficient control over the expectation (due to the unknown contribution of the heavy part) we resort to an $L^2$ analysis of the increments in the corresponding Doob's martingale and apply a Bernstein-Kolmogorov type large deviation inequality due to Freedman.

\subsection{Notation}
Throughout the paper we use $G$ to denote the base-graph, a $d$-regular graph on $m$ vertices, and let $H$ denote its random $n$-lift. The asymptotic notation is used under the assumption that $n \rightarrow \infty$.

For the sake of clarity, when addressing a vertex in $V(H) = [n]\times V(G)$ we will typically denote it either by $ij$ or by $i'j'$ using indices $i,i'\in[n]$ and $j,j'\in[m]$.
Whenever $u,v$ are vertices in some graph whose identity is clear from the context, the abbreviation $u \sim v$ will denote that these two vertices are adjacent. For example, $ij \sim i'j'$ will usually stand for  $(ij,i'j')\in E(H)$, which in turn implies that $j\sim j'$ in $G$ by the definition of the lift.

Unless stated otherwise, all logarithms are using base $2$ and $\|\cdot\|$ denotes the $L^2$ norm in the appropriate Euclidean space.

\subsection{Organization}
The rest of this paper is organized as follows. Section~\ref{sec:heavy} deals with the contribution of the heavy pairs to the bilinear form $x^\tr A_H \,y$. Section~\ref{sec:light} deals with the contribution of the light pairs.
In the final section, Section~\ref{sec:mainproof}, we combine these results to conclude the proof of Theorem~\ref{thm-1}.

\section{Heavy pairs and large cuts}\label{sec:heavy}

Let $G$ be an $(m,d,\lambda)$-graph (that is, a $d$-regular graph where all nontrivial eigenvalues are at most $\lambda$ in absolute value) with adjacency matrix $A_G$, and $H$ be a random $n$-lift of $G$ with adjacency matrix $A_H$. As mentioned before, the largest nontrivial eigenvalue of $H$ in absolute value is precisely
\[ \max\left\{ \left| x^\tr A_H \, x\right| : x \in \R^{mn}\,,\, \left<x,\allone\right>=0\,,\,\|x\|= 1 \right\}\,,\]
where $\allone$ denotes the trivial eigenvector.

\subsection{Heavy pairs}
We first analyze the typical contribution to $x^\tr A_H \, y$ from pairs $x_{ij},y_{i'j'}$ with fairly large products.
More precisely, we say that a pair $x_{ij},y_{i'j'}$ is \emph{heavy} if $|x_{ij}y_{i'j'}| \geq \lambda/mn$, and otherwise it is \emph{light}.
For $x,y\in\R^{mn}$, define $R_h(x,y)$ to be the random variable
\[ R_h(x,y) = \sum_{ij\sim i'j'} x_{ij}y_{i'j'} \one_{\{|x_{ij}y_{i'j'}|\geq \lambda/mn\}} \,.\]
The next theorem estimates the contribution of the heavy pairs along the edges of $H$. The exponent of $m$ in the requirement $n \geq m^{3/2}$ was selected to simplify the exposition and can be replaced by $n \geq m^{1+\delta}$ for any $\delta > 0$.
\begin{theorem}\label{thm-heavy}
Let $G$ be an $(m,d,\lambda)$-graph with $\lambda \geq \sqrt{d}$ and let $H$ be a random $n$-lift of $G$
for $n\geq m^{3/2}$. Then with probability at least $1-O(n^{-100})$ every $x,y\in\R^{mn}$ with $\|x\|\leq 1$ and $\|y\|\leq1$ satisfy
$|R_h(x,y)| \leq 3500 \, \lambda \log d$ and moreover $|R_h(x,y) - \E [R_h(x,y)]| \leq 7000 \, \lambda \log d$.
\end{theorem}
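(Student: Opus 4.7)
The plan is to prove Theorem~\ref{thm-heavy} by reducing it to a uniform discrepancy (``cut'') inequality for the random lift $H$, then running a dyadic decomposition of the coordinates of $x$ and $y$ and summing the contributions over levels.

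\textbf{Step 1: A cut lemma for $H$.}  The key technical input is that, with probability $1-O(n^{-101})$, every pair of subsets $A,B\subseteq V(H)$ satisfies
\[
  e_H(A,B)\ \leq\ \frac{d|A||B|}{mn}\ +\ C\lambda\sqrt{|A||B|}\ +\ C\log(mn)\cdot\max(|A|,|B|).
\]
Conditioning on the fiber profiles $a_u:=|A\cap\pi^{-1}(u)|$ and $b_v:=|B\cap\pi^{-1}(v)|$, a direct count gives $\E\,e_H(A,B)=\tfrac{1}{n}\mathbf a^\tr A_G\mathbf b$, so the algebraic expander mixing lemma for $G$ together with $\|\mathbf a\|\le\sqrt{n|A|}$ bounds the mean by $\tfrac{d|A||B|}{mn}+\lambda\sqrt{|A||B|}$.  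For concentration, decompose $e_H(A,B)=\sum_{uv\in E(G)}E_{uv}(A,B)$, where $E_{uv}$ depends only on $\pi_{uv}$; a swap-based Doob martingale on each $\pi_{uv}$ yields sub-Gaussian tails with variance proxy $O(|A|+|B|)$.  A union bound over all subset pairs of sizes $k,\ell$, paid by the entropy $k\log(mn/k)+\ell\log(mn/\ell)$, absorbs the coefficient into the third term in the small-set regime where fluctuation dominates the expectation.

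\textbf{Step 2: Dyadic decomposition and summation.}  Partition the coordinates of $x$ and $y$ by magnitude into bins $I_s=\{ij:|x_{ij}|\in[2^{s-1}/\sqrt{mn},\,2^{s}/\sqrt{mn})\}$ and $J_t$ analogously, with $\|x\|\le 1$ yielding $|I_s|\le 4mn/4^s$.  The heaviness condition restricts attention to $s+t\ge T:=2+\log\lambda$, and each $I_s\times J_t$ edge contributes at most $2^{s+t}/(mn)$, so
\[
  |R_h(x,y)|\;\le\;\sum_{s+t\ge T}\frac{2^{s+t}}{mn}\,e_H(I_s,J_t).
\]
Plugging in the cut lemma, the density term sums to $O\bigl(d\sum_{u\ge T}(u{+}1)2^{-u}\bigr)=O(d\log\lambda/\lambda)=O(\lambda\log d)$ using $2^T=4\lambda$ and $d\le\lambda^2$.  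For the $\lambda\sqrt{|I_s||J_t|}$ term we use a second dyadic refinement via the saturation ratios $\kappa_s:=|I_s|\cdot 4^{s-1}/(mn)\in(0,1]$, which satisfy $\sum_s\kappa_s\le\|x\|^2\le 1$: each level then contributes $4\lambda\sqrt{\kappa_s\kappa'_t}$, and Cauchy--Schwarz together with the constraint $s+t\ge T$ bounds the total by $O(\lambda\log d)$.  The entropic third term is handled analogously using the trivial estimate $e_H(A,B)\le d\min(|A|,|B|)$ in the tiny-set range.

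\textbf{Step 3: Expectation control and main obstacle.}  The deterministic bound $|R_h|\le d$ (Cauchy--Schwarz over adjacent pairs) gives $|\E R_h|\le 3500\lambda\log d+d\cdot O(n^{-100})\le 3501\lambda\log d$ for $n$ large, so the deviation bound $|R_h-\E R_h|\le 7000\lambda\log d$ follows from the triangle inequality on the high-probability event.  The main obstacle is twofold: first, establishing the cut lemma in the small-set regime, where naive Azuma bounds are insufficient and one must exploit sharper sampling-without-replacement concentration for $\pi_{uv}$; second, the delicate book-keeping in Step 2 for the $\lambda\sqrt{|\cdot|}$ contribution, where the number of effectively saturated dyadic level pairs must be shown to collapse from the naive $O(\log^2(mn))$ down to an effective $O(\log d)$ via the saturation-ratio refinement --- hence the paper's allusion to ``two levels of dyadic expansions.''
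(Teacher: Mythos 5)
Your high-level architecture --- reduce to a uniform cut bound for $H$, then control $|R_h(x,y)|$ by a dyadic decomposition of the coordinates of $x$ and $y$ --- matches the paper's strategy (Proposition~\ref{prop-e(A,B)-lift-bound}, applied in the proof of Theorem~\ref{thm-heavy}). There are, however, two genuine gaps.

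First, the cut lemma in your Step~1, with additive error $C\log(mn)\max(|A|,|B|)$, is too weak, and the entropy-versus-concentration argument you sketch cannot give the required form. Feeding that additive term into Step~2 produces, after multiplying by $2^{s+t}/(mn)$ and summing over the heavy range, a contribution that grows with $\log(mn)$ and thus diverges relative to the target $O(\lambda\log d)$ as $n\to\infty$. The paper's Proposition~\ref{prop-e(A,B)-lift-bound} is stated for $|A||B|\le(2mn/\lambda)^2$ and achieves additive error $75(|A|+|B|)\log^2 d$, with a $d$-dependent rather than $n$-dependent logarithm, and this distinction is essential. The mechanism is the second dyadic expansion, which is \emph{not} your ``saturation ratios'' of Step~2 but a fiber-proportion decomposition inside the proof of the cut lemma: one writes $A=\bigcup_i A_i$ where $A_i$ lives in fibers $S_i\subset V(G)$ contributing proportion $\approx 2^{-i}$ each. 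Given $S_i$ (paid $2^m$, negligible since $m\le n^{2/3}$), the number of configurations of $A_i$ is $\binom{n}{2^{-i}n}^{s_i}\le 2^{(i+2)|A_i|}$, so the per-vertex entropy is $i+2$ rather than $\log(mn)$. Together with the restriction to $|i-j|\le 2\log d$ (and trivial $d$-regularity bounds outside), and the optimization in Lemma~\ref{lem-wij-star}, this produces the $\log^2 d$ factor. Paying the full entropy $k\log(mn/k)+\ell\log(mn/\ell)$ over arbitrary subsets, as you propose, cannot reproduce this, and the claim that it is ``absorbed'' is exactly where the argument breaks.

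Second, even granting a correctly-shaped cut lemma, your Step~2 bound for the $\lambda\sqrt{|I_s||J_t|}$ term is not $O(\lambda\log d)$. With $\kappa_s=|I_s|4^{s-1}/(mn)$, the sum $\sum_{s+t\ge T}\sqrt{\kappa_s\kappa'_t}$ is, by Cauchy--Schwarz along diagonals $s-t=k$, bounded only by the number of participating diagonals; the anti-diagonal constraint $s+t\ge T$ does not limit $k$, since $\kappa_s>0$ allows $s$ to range up to about $\tfrac12\log(mn)$, giving $\Theta(\log(mn/\lambda))$ diagonals and hence $O(\lambda\log(mn))$, not $O(\lambda\log d)$. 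The paper's remedy is to set $D=\log\bigl(d/(\lambda\log d)\bigr)-3$, so that $2D+1\le\log d$ since $\lambda\ge\sqrt d$, and to split: for $|\ell-\ell'|\le D$, Cauchy--Schwarz along each of the $\le\log d$ diagonals using $\sum_\ell 4^\ell|\cD_\ell|\lambda/(mn)\le 1$ gives $\lambda$ per diagonal; for $|\ell-\ell'|>D$, the trivial bound $e(\cD_\ell,\cD'_{\ell'})\le d|\cD_\ell|$ plus geometric decay in $|\ell-\ell'|$ gives $d\cdot 2^{-D}=O(\lambda\log d)$. Your proposal lacks an analogue of this diagonal cutoff. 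Your Step~3 matches the paper's argument and is correct.
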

The main ingredient in the proof of Theorem~\ref{thm-heavy} is the following lemma,
which provides an upper bound on the number of edges in a cut between subsets of vertices in $H$.
\begin{proposition}\label{prop-e(A,B)-lift-bound}
Let $H$ be a random $n$-lift of an $(m,d,\lambda)$-graph $G$ with $\lambda\geq \sqrt{d}$ and $n\geq m^{3/2}$.
 Then except with probability $O(n^{-100})$, every two subsets of vertices $A,B \subset V(H)$ with $|A||B| \leq (2mn/\lambda)^2$ satisfy
\begin{equation}
  \label{eq-e(A,B)-expander-bound}
e(A,B) \leq 802\,\lambda\sqrt{|A||B|} + 75(|A|+|B|)\log^2 d\,.
\end{equation}
\end{proposition}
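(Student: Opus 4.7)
Fix subsets $A, B \subset V(H)$ of sizes $a := |A|$ and $b := |B|$. My plan is a three-step Kahn--Szemer\'edi style argument: (i) bound $\E\, e_H(A, B)$ using the expander mixing lemma on $G$; (ii) prove a Bernstein-type tail bound exploiting that each $G$-edge contributes independently; (iii) union-bound over $(A, B)$ after a dyadic partition in $(a, b)$. For the expectation, let $a_j := |A \cap (\{j\}\times[n])|$ and $b_j := |B \cap (\{j\} \times [n])|$, giving vectors $\vec a, \vec b \in \R^{m}$. For a uniform bijection $\pi: [n] \to [n]$ and $S, T \subset [n]$, $\E|\pi(S) \cap T| = |S||T|/n$; summing over the independent matchings $\pi_{jj'}$ gives
\[ \E\, e_H(A, B) \;=\; \frac{1}{n}\, \vec{a}^{\,\tr} A_G\, \vec{b}. \]
Decomposing $\vec a, \vec b$ along and perpendicular to $\allone$ and applying the spectral expander mixing estimate for $G$ (using $\|\vec a\|_2^2 \leq a n$ since each $a_j \leq n$) yields $\E\, e_H(A, B) \leq d\,ab/(mn) + \lambda \sqrt{ab}$. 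The hypothesis $ab \leq (2mn/\lambda)^2$ combined with $\lambda \geq \sqrt d$ converts the first summand into $(2d/\lambda)\sqrt{ab} \leq 2\lambda\sqrt{ab}$, so $\E\, e_H(A, B) \leq 3 \lambda \sqrt{ab}$.

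For the concentration step, decompose $e_H(A, B) = \sum_{jj' \in E(G)} X_{jj'}$, where $X_{jj'}$ counts the edges of the matching $\pi_{jj'}$ with one endpoint in $A$ and the other in $B$. These are mutually independent across $G$-edges, each bounded by $M_{jj'} := \min(a_j+a_{j'},\, b_j+b_{j'})$, and each is a hypergeometric-type sum of negatively associated indicators with $\var(X_{jj'}) \leq (a_j+a_{j'})(b_j+b_{j'})/n$. Bernstein's inequality then gives
\[ \P\bigl[\, e_H(A,B) \geq \E\, e_H(A,B) + t \,\bigr] \;\leq\; \exp\!\left( -\frac{t^2/2}{V + Mt/3} \right), \]
with $V = \sum_{jj'} \var(X_{jj'})$ and $M = \max_{jj'} M_{jj'}$. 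A book-keeping using $\sum_{jj' \in E(G)}(a_j+a_{j'}) = d a$ and the analogous identity for $b$ bounds $V$ by $O(d(a+b))$.

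The number of pairs $(A, B)$ with $(|A|,|B|) = (a, b)$ is at most $(emn/a)^a(emn/b)^b$, so the union bound requires the Bernstein exponent to exceed $100\log n + a \log(emn/a) + b \log(emn/b)$. Taking $t \approx 800\lambda\sqrt{ab} + 75(a+b)\log^2 d$ should clear this threshold: the quadratic term dominates in the sub-Gaussian regime (large $ab$ and spread-out sets, $V$-bottleneck), while the linear $75(a+b)\log^2 d$ slack handles the Poissonian regime ($Mt$-bottleneck), including the case where $A$ or $B$ is concentrated in a few fibers and $M$ may be as large as $n$. I expect this unbalanced regime to be the main obstacle, requiring a careful dyadic split of $(a,b)$ and a separate verification that the $\log^2 d$ factor simultaneously absorbs (a) the residual expectation $3\lambda\sqrt{ab}$, (b) the entropy $a\log(emn/a) + b\log(emn/b)$, and (c) the $Mt$ term when $M$ is close to $n$. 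Summing over the dyadic boxes then yields the stated $O(n^{-100})$ probability.
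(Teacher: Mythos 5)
Your expectation computation is correct and even a touch cleaner than the corresponding step in the paper: identifying $\E e_H(A,B) = \tfrac{1}{n}\vec a^{\,\tr}A_G\vec b$ and applying the spectral bound with $\|\vec a\|^2\le na$ gives $\E e_H(A,B)\le \tfrac{d}{mn}ab+\lambda\sqrt{ab}\le 3\lambda\sqrt{ab}$ under the hypothesis $ab\le(2mn/\lambda)^2$ and $\lambda\ge\sqrt d$. The concentration step, however, has two genuine gaps, and they are exactly where the paper's argument is forced to be more delicate.

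First, applying Bernstein to the aggregated variables $X_{jj'}$ with increment bound $M=\max_{jj'}M_{jj'}$ throws away a factor of $M$ in the exponent, and $M$ can be as large as $\Theta(n/m)$ even for perfectly spread-out sets (and as large as $\Theta(n)$ for concentrated ones). Concretely, take $a=b=n$ with $a_j=b_j=n/m$ on every fiber; then $M\approx 2n/m$, the Poissonian exponent $3t/(2M)\approx tm/n$ is of order $\lambda m$, while the entropy $\binom{mn}{n}^2$ is of order $n\log m\ge m^{3/2}\log m$, so the union bound fails for large $m$. The paper avoids this by never aggregating: it counts $k$-tuples of \emph{potential $H$-edges} and multiplies by $(3/n)^k$, i.e., it works at the level of individual indicators (which form a negatively associated family with increment $1$). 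That recovers a Poisson-tail exponent of the form $k\log(\cdot)$ rather than $(k/M)\log(\cdot)$. If you want a concentration-inequality phrasing you would need a Bennett/Chernoff bound applied to the $\one_{\{\pi_{jj'}(i)=i'\}}$ rather than Bernstein on $\sum_{jj'}X_{jj'}$; your quoted inequality $\exp(-\tfrac{t^2/2}{V+Mt/3})$ is also missing the logarithmic gain of the true Poisson tail, which is essential here.

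Second, even with the sharp indicator-level tail, a dyadic split on $(a,b)$ alone does not close the union bound, because the entropy of size-$a$, size-$b$ subsets and the expectation $\mu$ both depend on the \emph{fiber profile} $(\vec a,\vec b)$, not just on $(a,b)$. If you replace the true $\mu$ by the worst-case bound $3\lambda\sqrt{ab}$, the logarithmic factor $\log(t/\mu)$ shrinks to a constant and the exponent $\approx C\lambda\sqrt{ab}$ falls short of the entropy $a\log(emn/a)+b\log(emn/b)$, e.g.\ already at $a=b=n^{2/3}$. The resolution is the paper's second-level dyadic decomposition: partition the fibers into $S_i=\{v: |A\cap\text{fiber}_v|\asymp 2^{-i}n\}$ (and $T_j$ for $B$), and union-bound separately over the profile $(S_i,T_j)$ (at most $2^{2m}$ choices, negligible since $m\le n^{2/3}$) and over placements within the profile (at most $2^{(i+2)s_i2^{-i}n}$); the profile now controls both the entropy and the expected cut $e_G(S_i,T_j)2^{-i-j}n$ simultaneously, so the two quantities can be matched. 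This is a genuinely necessary refinement, not bookkeeping. Finally, sets with $|A|+|B|<n^{2/3}$ must be treated separately by the trivial first-moment bound $e(A,B)\le 50(|A|+|B|)$ (the paper's Claim~\ref{cl-small-A,B}); your concentration bound, whose threshold is $\Theta(1)$ in $n$, cannot beat the $\Theta(\log n)$ entropy per vertex of such sets.
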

We will next show how to derive Theorem~\ref{thm-heavy} from this lemma, whose proof is postponed to Subsection~\ref{sec-cuts-between-large-subsets}.
\begin{proof}[\emph{\textbf{Proof of Theorem~\ref{thm-heavy}}}]
Consider the following dyadic expansion of $x,y$:
\begin{align*}
\cD_\ell = &\bigg\{(i,j):  2^{\ell} \leq |x_{ij}| \sqrt{\frac{mn}{\lambda}}< 2^{\ell+1}\bigg\}&(\ell \in \Z)\,,\\
\cD'_{\ell'} = &\bigg\{(i',j'): 2^{\ell}\leq |y_{i'j'}| \sqrt{\frac{mn}{\lambda}}< 2^{\ell'+1}\bigg\}&(\ell' \in \Z)\,,
\end{align*}
and identify any element $(i,j)$ in $\cD_\ell$ or $\cD_\ell'$ with the vertex $ij$ in $H$. These definitions,
together with the assumption on $\|x\|$ and $\|y\|$, implies that
\begin{align}\label{eq-H-ell-l2-bound}
  \sum_\ell 4^\ell |\cD_\ell| \frac{\lambda}{mn} \leq \sum_{i,j} x_{ij}^2 \leq 1\,,\quad
  \sum_{\ell'} 4^{\ell'} |\cD'_{\ell'}| \frac{\lambda}{mn} \leq \sum_{i',j'} y_{i'j'}^2 \leq 1\,.
\end{align}
Furthermore, if $ij\in\cD_\ell$ and $i'j'\in \cD'_{\ell'}$ then a necessary condition for
$|x_{ij} y_{i'j'}| \geq \lambda/mn$ is that $\ell+\ell'+2 > 0$, and so
\begin{equation}
  \label{eq-heavy-pairs-dyadic}
\sum_{ij\sim i'j'} \left|x_{ij}y_{i'j'}\right| \one_{\{|x_{ij}y_{i'j'}|\geq \lambda/mn\}}\leq
4 \sum_{\ell+\ell' > -2} 2^{\ell+\ell'} e(\cD_\ell,\cD'_{\ell'}) \frac{\lambda}{mn}\,,
\end{equation}
where we $e(\cD_\ell,\cD'_{\ell'})$ is the number of edges between the two subsets of vertices in $H$ corresponding to $\cD_\ell$ and $\cD_\ell'$.

To prove that $|R_h(x,y)| = O(\lambda\log d)$, we set
\[ D = \log\Big(\frac{d}{\lambda\log d}\Big)-3 \,,\]
and analyze the sum in \eqref{eq-heavy-pairs-dyadic} according to whether or not $|\ell-\ell'| > D$.

First, consider $D \leq 0$. As $H$ is $d$-regular, trivially $e(\cD_\ell,\cD'_{\ell'}) \leq d |\cD_\ell|$, yielding
\begin{align*}
  \sum_{\substack{\ell+\ell'> -2\\ \ell \geq \ell'}} 2^{\ell+\ell'}
  e(\cD_{\ell},\cD'_{\ell'})\frac{\lambda}{mn} &\leq \sum_{\ell}4^\ell \cdot d|\cD_\ell|\cdot \frac{\lambda}{mn}
  \sum_{\ell \geq \ell'} 2^{-(\ell-\ell')} \leq 2d\,,
\end{align*}
where the second inequality used \eqref{eq-H-ell-l2-bound}. Similarly,
\begin{align*}
  \sum_{\substack{\ell+\ell'> -2\\ \ell' \geq \ell}} 2^{\ell+\ell'}
  e(\cD_{\ell},\cD'_{\ell'})\frac{\lambda}{mn} &\leq 2d\,,
\end{align*}
and since $D \leq 0$ occurs if and only if $d \leq 8\lambda\log d$, altogether in this case
\begin{align}\label{eq-D<0}
  \sum_{\ell+\ell'> -2} 2^{\ell+\ell'}
 e(\cD_{\ell},\cD'_{\ell'})\frac{\lambda}{mn} &\leq 4d \leq 32\lambda\log d\,.\end{align}

We now focus on $D > 0$. Consider the case where $\ell-\ell' > D > 0$. Repeating the above argument, we now get
\begin{align*}
  \sum_{\substack{\ell+\ell'> -2\\ \ell-\ell'> D}} 2^{\ell+\ell'}
  e(\cD_{\ell},\cD'_{\ell'})\frac{\lambda}{mn} &\leq \sum_{\ell}4^\ell \cdot d|\cD_\ell|\cdot \frac{\lambda}{mn}
  \sum_{\ell-\ell'>D} 2^{-(\ell-\ell')}\\
  &\leq d 2^{-D} \leq 8\lambda\log d\,,
\end{align*}
By symmetry, the same argument holds for the case $\ell'-\ell > D$, and we infer that
\begin{align}\label{eq-ell-ell'-geq-D}
  \sum_{\substack{\ell+\ell'> -2\\ |\ell'-\ell|> D}} 2^{\ell+\ell'}
  e(\cD_{\ell},\cD'_{\ell'})\frac{\lambda}{mn} &\leq 16\lambda\log d\,.\end{align}
It remains to treat $|\ell-\ell'| \leq D$ with $D > 0$. This will be achieved with the help of Proposition~\ref{prop-e(A,B)-lift-bound}, which estimates the size of the cut between two subsets $A,B$ in case $|A||B| \leq (2mn/\lambda)^2$.
Indeed, for $\ell+\ell' \geq -1$ we have 
\begin{align*}
\frac14 |\cD_\ell||\cD'_{\ell'}| \frac{\lambda^2}{(mn)^2} &\leq 4^{\ell+\ell'} |\cD_\ell||\cD'_{\ell'}| \frac{\lambda^2}{(mn)^2}
\leq \sum_{ij\in \cD_\ell}\sum_{i'j'\in\cD'_{\ell'}} x_{ij}^2 y_{i'j'}^2 \\
&\leq \sum_{ij} x_{ij}^2 \sum_{i'j'} y_{i'j'}^2 \leq 1\,,
\end{align*}
and therefore $|\cD_\ell||\cD'_{\ell'}| \leq (2mn/\lambda)^2$.
Thus, Proposition~\ref{prop-e(A,B)-lift-bound} gives that with probability $1-O(n^{-100})$,
\begin{align}
  \sum_{\substack{\ell+\ell'> -2\\ |\ell-\ell'| \leq D}} &2^{\ell+\ell'}
  e(\cD_{\ell},\cD'_{\ell'})\frac{\lambda}{mn} \leq  802 \sum_{\substack{\ell+\ell'>-2\\ |\ell-\ell'| \leq D}} 2^{\ell+\ell'}\frac{\lambda}{mn}
  \cdot \lambda\sqrt{|\cD_{\ell}||\cD'_{\ell'}|} \nonumber\\
 & +     75 \sum_{\substack{\ell+\ell'> -2\\ |\ell-\ell'| \leq D}} 2^{\ell+\ell'}\frac{\lambda}{mn}
 \cdot (|\cD_{\ell}|+|\cD'_{\ell'}|)\log^2 d\,.\label{eq-heavy-bound-small-D}
\end{align}
For the first expression in the right-hand-side of \eqref{eq-heavy-bound-small-D}, note that
there are at most $2D+1 \leq 2\log (d/\lambda) \leq \log d $ integers $k$ such that $|k| \leq D$ (here we used the fact that $\lambda\geq\sqrt{d}$).  For each such value, we can combine \eqref{eq-H-ell-l2-bound}
with Cauchy-Schwartz to get that
\begin{align*}
 \sum_{\substack{\ell+\ell'> -2\\ \ell-\ell' = k}} 2^{\ell+\ell'}\frac{\lambda}{mn}
  \cdot \lambda\sqrt{|\cD_{\ell}||\cD'_{\ell'}|} \leq \lambda \sqrt{\sum_{\ell} 4^{\ell}|\cD_\ell|\frac{\lambda}{mn}}
  \sqrt{\sum_{\ell'} 4^{\ell'}|\cD'_{\ell'}|\frac{\lambda}{mn}} \leq \lambda\,,
\end{align*}
and summing over $k$ it follows that
\begin{align*}
  \sum_{\substack{\ell+\ell'> -2\\ |\ell-\ell'| \leq D}} 2^{\ell+\ell'}\frac{\lambda}{mn} \cdot\lambda\sqrt{|\cD_{\ell}||\cD'_{\ell'}|} \leq  \lambda \log d \,.
\end{align*}
For the second expression in \eqref{eq-heavy-bound-small-D}, again recall that $\lambda\geq \sqrt{d}$, and so
\begin{align*}
\sum_{\substack{\ell+\ell'> -2\\ |\ell-\ell'| \leq D}} &2^{\ell+\ell'}\frac{\lambda}{mn}
 |\cD_{\ell}|\log^2 d \leq \sum_{\ell} 4^{\ell} \frac{\lambda}{mn}
 |\cD_{\ell}|\log^2 d \sum_{|k| \leq D}2^{-k} \\
 &< 2^{D+1}\log^2 d = \frac{d}{4\lambda\log d }\log^2 d \leq \frac14 \lambda\log d\,,
 \end{align*}
 and the same applies to the analogous quantity for $|\cD'_{\ell'}|$.

Altogether, these two estimates for \eqref{eq-heavy-bound-small-D} sum up to
\begin{align*}
  \sum_{\substack{\ell+\ell'> -2\\ |\ell-\ell'| \leq D}} &2^{\ell+\ell'}
e(\cD_{\ell},\cD'_{\ell'})\frac{\lambda}{mn} \leq (802 + 2\cdot \tfrac{75}4)\lambda \log d < 840\,\lambda\log d\,,\end{align*}
and combining this with \eqref{eq-D<0} and \eqref{eq-ell-ell'-geq-D} gives that
\begin{align*}
\sum_{\ell+\ell'> -2} 2^{\ell+\ell'}
  e(\cD_{\ell},\cD'_{\ell'})\frac{\lambda}{mn} &\leq 856\, \lambda\log d\,.
\end{align*}
Recalling \eqref{eq-heavy-pairs-dyadic}, we deduce that
\begin{align*}
|R_h(x,y)| \leq  4 \sum_{\ell+\ell'> -2} 2^{\ell+\ell'}
  e(\cD_{\ell},\cD'_{\ell'})\frac{\lambda}{mn} &\leq 3424\, \lambda\log d\,.
\end{align*}
To obtain the statement on $|R_h(x,y)-\E[R_h(x,y)]|$ first note that since $H$ is $d$-regular
$|R_h(x,y)| \leq\lambda_1(H) = d$ with probability $1$ for any two unit vectors $x,y$.
We have already established that, except with probability $O(n^{-100})$, every pair of vectors $x,y$ with norm at most $1$ satisfies $|R_h(x,y)|\leq 3424\, \lambda\log d$. Hence,
\[ \E |R_h(x,y)| \leq 3424 \, \lambda \log d + d O(n^{-100}) < 3425 \, \lambda \log d\,,\]
where the last inequality holds for any sufficiently large $n$. Reapplying the result on $R_h(x,y)$
(along with the triangle inequality) now completes the proof of the theorem (with room to spare).
\end{proof}

\subsection{Proof of Proposition~\ref{prop-e(A,B)-lift-bound}}\label{sec-cuts-between-large-subsets}
Write $|A| = \alpha n$ and $|B| = \beta n$ where $0 < \alpha,\beta \leq m$. Our assumption on $|A||B|$ then translates into
\begin{equation}
  \label{eq-assumption-alpha-beta}
   \alpha \beta \leq (2m/\lambda)^2\,.
\end{equation}
We aim to show that, except with probability $1-O(n^{-100})$, for any such $A,B$ we have
$e(A,B) =O\big(\lambda\sqrt{|A||B|} + (|A|+|B|)\log^2 d\big)$, or in terms of $\alpha,\beta$, that
\begin{equation*}
  e(A,B)/n = O\left(\lambda\sqrt{\alpha\beta} + (\alpha+\beta) \log^2 d\right)\,.
\end{equation*}
Define the following partition of the fibers according to a dyadic expansion of their proportion that is included in $A$.
\begin{align*}
S_i &= \left\{v \in V(G) : 2^{-i-1} < \frac{\left|A \cap ([n] \times \{v\})\right|}{n} \leq 2^{-i}  \right\}~(i=0,1,\ldots,\log n)\,,\\
A_i &= A \cap \bigcup_{v \in S_i} ([n] \times \{v\}) \,,\quad s_i = |S_i|\,,\quad \alpha_i = s_i 2^{-i}\,.
\end{align*}
Notice that by these definitions, $s_i$ is the number of fibers with about $2^{-i} n$ vertices from $A$, and so $|A_i|\approx s_i 2^{-i} n = \alpha_i n$. In other words, there are about $\alpha_i n$ vertices of $A$ in fibers of type $S_i$, and more precisely,
\begin{align}\label{eq-alphai-alpha}\tfrac12 \alpha_i n < |A_i| \leq \alpha_i n\,,\quad \tfrac12 \sum_i \alpha_i < \alpha \leq \sum_i \alpha_i\,.\end{align}
Similarly, we perform an analogous dyadic expansion for $B$:
\begin{align*}
T_j &= \left\{v \in V(G) : 2^{-j-1} < \frac{\left|B \cap ([n]\times\{v\})\right|}{n} \leq 2^{-j}  \right\}~(j=0,1,\ldots,\log n)\,,\\
B_j &= B \cap \bigcup_{v \in T_j} ([n]\times\{v\}) \,,\quad t_j = |T_j|\,,\quad \beta_j = t_j 2^{-j}\,,
\end{align*}
and again have that
\begin{align}\label{eq-betai-beta}\tfrac12\beta_j n < |B_j| \leq \beta_j n\,,\quad \tfrac12\sum_j \beta_j < \beta\leq \sum_j \beta_j\,.\end{align}
Clearly,
\[ e(A,B) = \sum_{i,j} e(A_i, B_j)\,,\]
and our bound on $e(A,B)$ will follow from an analysis of the number of edges between the various types of $A_i$'s and $B_j$'s.

First, consider the case $i = j = 0$. Here we have $\frac12 s_0 n < |A_0| \leq |A|$ and $\frac12 t_0 n < |B_0| \leq |B|$. Since there are $n$ edges in $H$ between any pair of fibers that correspond to adjacent vertices in $G$, the Expander Mixing Lemma (see~\eqref{eq-exp-mixing}) applied to the base-graph $G$ gives that
\begin{align*}
 e(A_0,B_0) &\leq n e(S_0, T_0) \leq \frac{dn}m s_0 t_0 + n \lambda \sqrt{s_0 t_0}
 < \frac{4d}{mn} |A||B| + 2\lambda \sqrt{|A||B|} \\ &= \frac{4d}{m} \alpha\beta n + 2\lambda \sqrt{\alpha\beta}n\,.
\end{align*}
Recalling that $\sqrt{\alpha\beta} \leq 2m/\lambda$ (see \eqref{eq-assumption-alpha-beta}) it follows that
\begin{align}\label{eq-cut-A0-B0}
 e(A_0,B_0) &\leq 2\sqrt{\alpha\beta}n \Big(\frac{2d}{m} \cdot \frac{2m}{\lambda} + \lambda\Big)
 = 2\sqrt{\alpha\beta}n \Big(\frac{4d}{\lambda}+\lambda\Big) \leq 10\lambda \sqrt{\alpha\beta}n\,,
\end{align}
where the last inequality used the fact that $\lambda \geq \sqrt{d}$ and so $\lambda \geq d/\lambda$.

Next, consider $e(A_i,B_j)$ in case $|i-j| > 2 \log d$. There is a total of $s_i$ fibers in $A_i$, thus by definition of the $n$-lift of a $d$-regular graph there are at most $d s_i$ fibers, where $B_j$ may have vertices that contribute to $e(A_i,B_j)$.
Since $B_j$ has at most $2^{-j} n$ vertices in each fiber, and each vertex has $d$ neighbors in $H$, we deduce that
\begin{align*}
&\sum_{j - i > 2 \log d} e(A_i,B_j) \leq \sum_i \sum_{j > i + 2\log d}  d s_i \cdot 2^{-j} n \cdot d\\
&\quad=\sum_i   s_i 2^{-i} n  \cdot d^2\sum_{j-i > 2\log d} 2^{-(j-i)} \leq \sum_i s_i2^{-i}n = \sum_i \alpha_i n
\leq 2\alpha n\,,
\end{align*}
where the last inequality followed from \eqref{eq-alphai-alpha}. Similarly, we have
\begin{align*}
\sum_{i - j > 2 \log d} e(A_i,B_j) \leq \sum_j \beta_j n \leq 2\beta n \,,
\end{align*}
and conclude that
\begin{align}\label{eq-cut-diff-geq-logd}
\sum_{|i - j| > 2 \log d} e(A_i,B_j) \leq 2(\alpha+\beta)n\,.
\end{align}
It remains to treat the case $|i-j| \leq 2\log d$ for all $(i,j)\neq (0,0)$, where the required bound will only hold w.h.p.

Consider a prescribed set of $k$ pairs of vertices $(i_l j_l, i'_l j'_l)$ ($l\in[k]$) in $H$. We wish to bound the probability that $\{i_l j_l \sim i'_l j'_l \mbox{ for all $l$}\}$ by $(3/n)^k$. By the independence of the different pairs of fibers in the lift, it clearly suffices to show this when all the $i_l j_l$'s are on one fiber and all the $i'_l j'_l$'s are on another, i.e., for some $j\neq j'$ and all $l$ we have $j_l = j$ and $j'_l=j'$.
When $k \leq \frac23 n$ then it is straightforward that this probability is indeed at most $(3/n)^k$. To see this, expose the pairings of $i_1j ,i_2 j,\ldots,i_k j$ one by one, and note that for $l \leq k \leq \frac23 n$, the probability to match $i_l j$ to $i'_l j'$, given that so far we succeeded in matching all the $l-1$ previous pairs, is $1/(n-l+1) \leq 3/n$.

Further note that, when considering potential edges between $A_i$ and $B_j$, the case $k \geq \frac23 n$ can only arise when $i=j=0$, otherwise no two fibers have more than $n/2$ points of $A_i$ and $B_j$ respectively. Since we excluded the case $i=j=0$, the above estimate holds for any of our sets $A_i,B_j$.

Write
\[W_{ij} = e(A_i, B_j) / n\,,\] and recall that $A_i$ and $B_j$ are contained in the fibers corresponding to $S_i$ and $T_j$ resp., and have at most $2^{-i}n$ and $2^{-j}n$ vertices on each of these respective fibers. Suppose first that the identity of the fibers $S_i$ and $T_j$ are given (we will account for these later). In this case, the number of configurations of the vertices of $A_i$ on the fibers $S_i$ can be upper bounded by $2^{s_0 n}$ if $i=0$ and by $\binom{n}{2^{-i} n}^{s_i} \leq 2^{(i+2)s_i 2^{-i}n}$ if $i \neq 0$, here using the well-known inequality $\binom{a}b \leq (\mathrm{e}a/b)^b$. Similarly, the number of configuration of $B_j$ on the fibers $T_j$ is at most $2^{(j+2)t_j 2^{-j}n}$.
For each such configuration of the vertices of $A_i,B_j$ there are at most $2^{-i-j}n^2 e_G(S_i,T_j)$ pairs which may potentially be connected in $H$. From the above estimate on the probability of $k$ pairs being adjacent in $H$, it now follows that for any $w_{ij} > 0$ and choice of $S_i$ and $T_j$,
\begin{align*}
  \P(W_{ij} = w_{ij}) &\leq 2^{(i+2) s_i 2^{-i} n}2^{(j+2) t_j 2^{-j} n} \binom{2^{-i-j} n^2 e(S_i,T_j)}{w_{ij} n} \left(\frac3n\right)^{w_{ij} n}\\
  &\leq 2^{\left((i+2) \alpha_i   + (j+2) \beta_j \right)n} \left(9 \cdot 2^{-i-j} e(S_i,T_j)/w_{ij}\right)^{w_{ij}n}\,.
\end{align*}
Defining \[ z_{ij} = \frac{2^{i+j} \; w_{ij}}{9 e(S_i, T_j)}\,,\]
we then get
\begin{align}
  \P(W_{ij} = w_{ij}) &\leq \left(2^{(i+2) \alpha_i   + (j+2) \beta_j} 2^{- w_{ij} \log z_{ij}}\right)^n  \nonumber\\
   &= 2^{ \left[(i+2) \alpha_i   + (j+2) \beta_j - 9 e(S_i,T_j)2^{-i-j} z_{ij} \log z_{ij} \right] n } \,.\label{eq-prob-Wij}
\end{align}
We will next establish a threshold for $z_{ij}$ such that the above probability would be at most $\exp(-n^{3/4-o(1)})$ and then translate this bound to the cut $e(A_i,B_j)$ via the corresponding $w_{ij}$'s.

Consider the equation $x\log x = b$ and note that for
$b > 0$ it has a unique solution $x> 1$ monotone increasing in $b$. Let $z_{ij}^\star$ be the solution to
\begin{equation}  \label{eq-z-star}
  z^\star_{ij} \log z^\star_{ij} = \frac{2^{i+j}}{9e(S_i,T_j)}\left[(i+2)\alpha_i + (j+2)\beta_j + n^{-1/4}\right]\,,
\end{equation}
and define its counterpart (similar to the relation between $w_{ij}$ and $z_{ij}$)
\begin{equation}
  \label{eq-w-star}
w_{ij}^\star = \frac{9e(S_i,T_j)}{2^{i+j}} \left( z_{ij}^\star \vee 2\right)\,.
\end{equation}
Combining these definitions with the probability bound \eqref{eq-prob-Wij}, while noting that this bound in that equation is monotone decreasing in $z_{ij}$ (and hence in $w_{ij}$) in the range $z_{ij} \geq 1$, we deduce that
for any $k \geq w_{ij}^\star$,
\[ \P(W_{ij} = k) \leq 2^{-n^{3/4}}\,.\]
Since $W_{ij}=e(A_i,B_j)/n$ with $e(A_i,B_j) \leq e(H)=dmn/2$ we can sum $k$ over all possible values that $W_{ij}$ can accept and infer that
\[ \P(W_{ij} \geq w_{ij}^\star) \leq dm 2^{-n^{3/4}} \,.\]
Next recall that the above estimate was for $A_i,B_j$ with a given choice of the fibers $S_i,T_j$. Summing the above probability over all possible choices for such fibers (using a trivial bound of $2^m$ options for each of the sets) and then further summing over at most $\log^2 n$ pairs of $i,j$ we deduce that
\[ \P\left(\cup_{i,j} \{ e(A_i,B_j) \geq w_{ij}^\star n\}\right) \leq (d m  \log^2 n )2^{2m - n^{3/4}} < n^{-100}\,,\]
with the last inequality valid for any sufficiently large $n$ since $m \leq n^{2/3}$.
Collecting~\eqref{eq-cut-A0-B0} and~\eqref{eq-cut-diff-geq-logd} this yields that,
except with probability $n^{-100}$, any two sets $A,B$ with $\alpha\beta\leq (2m/\lambda)^2$ (as per \eqref{eq-assumption-alpha-beta}) satisfy
\begin{equation}
   \label{eq-e(A,B)-via-wij}
   e(A,B)/n \leq \sum 10\lambda\sqrt{\alpha\beta} + 2(\alpha+\beta) + \sum_{\substack{i+j > 0\\ |i-j|\leq 2\log d}} w_{ij}^\star\,.
 \end{equation}
It thus suffices to bound $\sum_{i,j} w_{ij}^\star $ in order to complete the proof.
\begin{lemma}\label{lem-wij-star}
Let $z_{ij}^\star$ be the solution to \eqref{eq-z-star} and let $w_{ij}^\star$ be its counterpart as given in \eqref{eq-w-star}. Then for any $d \geq 320$ we have
\[ \sum_{\substack{i+j > 0\\ |i-j|\leq 2\log d}} w^\star_{ij} \leq 792\lambda\sqrt{\alpha\beta} + 74 (\alpha+\beta)\log^2 d + n^{-1/3}\,.\]
\end{lemma}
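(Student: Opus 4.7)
The plan is to unify the two cases in the definition of $w^\star_{ij}$. When $z^\star_{ij}\le 2$, by \eqref{eq-w-star} we have $w^\star_{ij}=18\,e(S_i,T_j)/2^{i+j}$; when $z^\star_{ij}>2$, the bound $\log z^\star_{ij}\ge 1$ applied to \eqref{eq-z-star} gives $z^\star_{ij}\le 2^{i+j}[(i+2)\alpha_i+(j+2)\beta_j+n^{-1/4}]/(9e(S_i,T_j))$, so $w^\star_{ij}=9e(S_i,T_j)z^\star_{ij}/2^{i+j}\le (i+2)\alpha_i+(j+2)\beta_j+n^{-1/4}$. Hence in every case
\[w^\star_{ij}\ \le\ \frac{18\,e(S_i,T_j)}{2^{i+j}}+(i+2)\alpha_i+(j+2)\beta_j+n^{-1/4}.\]

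The first term I would sum using the Expander Mixing Lemma on the base graph $G$: $e(S_i,T_j)\le (d/m)s_it_j+\lambda\sqrt{s_it_j}$, translated via $s_i 2^{-i}=\alpha_i$ and $t_j 2^{-j}=\beta_j$. The piece $\sum(d/m)\alpha_i\beta_j\le 4d\alpha\beta/m$ is absorbed into $O(\lambda\sqrt{\alpha\beta})$ through the hypothesis $\sqrt{\alpha\beta}\le 2m/\lambda$ combined with $\lambda\ge \sqrt d$. For the $\lambda\sqrt{s_it_j}$ piece, Cauchy--Schwarz yields $\bigl(\sum_i\sqrt{\alpha_i}\,2^{-i/2}\bigr)^2\le(\sum_i\alpha_i)(\sum_i 2^{-i})\le 4\alpha$ and likewise for $\beta$, producing another $O(\lambda\sqrt{\alpha\beta})$. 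The $n^{-1/4}$ residual, summed over the at most $(\log n+1)(4\log d+1)$ admissible pairs, is $o(n^{-1/3})$ for $n$ sufficiently large, matching the claimed error.

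The main obstacle is bounding the entropy sum $\sum_{|i-j|\le 2\log d,\,i+j>0}\bigl[(i+2)\alpha_i+(j+2)\beta_j\bigr]$ by $O((\alpha+\beta)\log^2 d)$. Since for each fixed $i$ there are at most $4\log d+1$ admissible $j$, by symmetry it suffices to bound $\sum_i(i+2)\alpha_i$ by $O(\alpha\log d)$ (plus a residual to be absorbed into the mixing term). I would split at $i_0:=\log d$. For $i\le \log d$, the crude estimate $(i+2)\le \log d+2$ together with $\sum_i\alpha_i\le 2\alpha$ gives $\le 2\alpha(\log d+2)$. For $i>\log d$, I exploit the individual bound $\alpha_i\le m 2^{-i}$ (coming from $s_i\le m$) and the telescoping identity $\sum_{i\ge k}(i+2)2^{-i}=(k+3)2^{1-k}$ to obtain $\sum_{i>\log d}(i+2)\alpha_i\le m(\log d+4)/d=O(m\log d/d)$. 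After multiplication by the outer $4\log d+1$ factor, the residual $O(m\log^2 d/d)$ is absorbed into the $\lambda\sqrt{\alpha\beta}$ contribution using $m\ge \lambda\sqrt{\alpha\beta}/2$ and $\lambda\ge \sqrt d$, with the trivial bound $e(A,B)\le d\min(|A|,|B|)$ covering the degenerate corner where $\sqrt{\alpha\beta}$ is too small for the absorption. Tracking the constants through this case analysis delivers the stated inequality $\sum w^\star_{ij}\le 792\lambda\sqrt{\alpha\beta}+74(\alpha+\beta)\log^2 d+n^{-1/3}$.
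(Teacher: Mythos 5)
The unification step $w^\star_{ij}\le \frac{18\,e(S_i,T_j)}{2^{i+j}}+(i+2)\alpha_i+(j+2)\beta_j+n^{-1/4}$ is a valid upper bound, and your handling of the $e(S_i,T_j)$ piece via the Expander Mixing Lemma is essentially the paper's treatment of the case $z^\star_{ij}<2$. The $n^{-1/4}$ residual is also fine. But by replacing the implicit relation $z^\star\log z^\star=b_{ij}$ with the crude $z^\star\le b_{ij}$ (via $\log z^\star\ge 1$), you discard precisely the mechanism the paper needs, and the resulting ``entropy sum'' $\sum_{|i-j|\le 2\log d,\,i+j>0}\bigl[(i+2)\alpha_i+(j+2)\beta_j\bigr]$ cannot be bounded by $O((\alpha+\beta)\log^2 d)$.

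To see the failure concretely: the indices $i$ range up to $\log n$, not $\log d$. Your split gives $\sum_{i>\log d}(i+2)\alpha_i\le m(\log d+4)/d$ via $\alpha_i\le m2^{-i}$, leaving a residual of order $m\log^2 d/d$ after the outer $O(\log d)$ multiplicity. You propose absorbing this into $\lambda\sqrt{\alpha\beta}$ using $m\ge \lambda\sqrt{\alpha\beta}/2$, but that inequality only \emph{upper}-bounds $\lambda\sqrt{\alpha\beta}$ by $2m$; it gives no lower bound, so $m\log^2 d/d$ can dominate $\lambda\sqrt{\alpha\beta}+(\alpha+\beta)\log^2 d$ arbitrarily when $\alpha,\beta$ are small (e.g.\ $|A|=|B|=1$, so $\alpha=\beta=1/n$, while $m$ is fixed). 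The escape hatch ``the trivial bound $e(A,B)\le d\min(|A|,|B|)$ covers the degenerate corner'' does not apply here: the lemma is a deterministic statement about $\sum w^\star_{ij}$, not about $e(A,B)$, and in any case $d\min(|A|,|B|)$ is far larger than $(\alpha+\beta)\log^2 d\cdot n$ unless $d=O(\log^2 d)$. Nor does switching to the tighter $\alpha_i\le 2\alpha$ help, since then $\sum_{i>\log d}(i+2)\alpha_i$ picks up an uncontrolled factor of order $\log n$ (concentrating all mass at $i^\star=\log n$ yields $\sum(i+2)\alpha_i\approx 2\alpha\log n$).

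The paper avoids this by keeping the sharper consequence $z^\star<2b_{ij}/\log b_{ij}$, so that $w^\star_{ij}\lesssim (i\alpha_i+j\beta_j)/\log b_{ij}$, and then showing that whenever $i$ (or $j$) exceeds $4\log d$ the denominator $\log b_{ij}$ is itself of order $i$: in the density-dominated case $\alpha_i\le m2^{-i}$ gives $b_{ij}\gtrsim 2^i/d$, hence $\log b_{ij}\ge\tfrac34 i$, and in the mixing-dominated case $2^{(i+j)/4}>\lambda$ gives $\log b_{ij}\ge(i+j)/4$. This cancels the linear-in-$i$ weight and reduces the sum to $O(\alpha_i+\beta_j)$ per pair, which is what makes the $(\alpha+\beta)\log^2 d$ total achievable. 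Without retaining that $1/\log b_{ij}$ factor there is no way to control the tail $i\gg\log d$, and the lemma as stated does not follow from your outline.
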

\begin{proof}
We first consider pairs $i,j$ such that $z_{ij}^\star \geq 2$.

By the definition of $z_{ij}^\star$ as the solution of \eqref{eq-z-star}, the right-hand-side of that equation, which we denote by $b_{ij}$, necessarily then satisfies
\begin{equation}
  \label{eq-assumption-z-2}
b_{ij} = \frac{2^{i+j}}{9e(S_i,T_j)}\left[(i+2)\alpha_i + (j+2)\beta_j + n^{-1/4}\right] \geq 2\,.
\end{equation}
Furthermore, since for any $b > 1$ the solution of $z \log z = b$ satisfies $z < 2\frac{b}{\log b}$ (this is easy to verify using the monotonicity of $z\log z$) we can infer an upper bound on $z_{ij}^\star$ in the form of
\begin{align*}
z_{ij}^\star \leq 2  \frac{2^{i+j}}{9e(S_i,T_j)\log b_{ij}}\left[(i+2)\alpha_i + (j+2)\beta_j + n^{-1/4}\right]\,,
\end{align*}
and as a consequence
\begin{align*}
 w_{ij}^\star\one_{\{z^\star_{ij}\geq 2\}} &= \frac{9e(S_i,T_j)}{2^{i+j}} z_{ij}^\star \leq \frac{2}{\log b_{ij}} \left[(i+2)\alpha_i + (j+2)\beta_j +
 n^{-1/4}\right]\,.
\end{align*}
Immediately by \eqref{eq-assumption-z-2} the last denominator is at least $1$ and so
\[ \sum_{|i-j|\leq 2\log d} \frac{n^{-1/4}}{\log b_{ij}}
\leq  \sum_{i,j} n^{-1/4} \leq n^{-1/4} \log^2 n = o(n^{-1/3})\,,\]
where $0 \leq i,j \leq \log n$ by definition.
Next, note that for the same reason
\begin{align*}
  \sum_{|i-j| \leq 2\log d}
\frac{\alpha_i + \beta_j }{\log b_{ij}}
&\leq  (1+4\log d) \bigg(\sum_{i} \alpha_i + \sum_j \beta_j\bigg) \\
&\leq (2+8\log d)(\alpha+\beta)\,,
\end{align*}
using the fact that $\sum_i \alpha_i \leq 2\alpha $ and $\sum_j \beta_j \leq 2\beta$ as given in \eqref{eq-alphai-alpha},\eqref{eq-betai-beta}. The combination of the last three equations
implies that for large enough $n$,
\begin{align}
\sum_{\substack{i+j>0\\ |i-j| \leq 2\log d}}  w_{ij}^\star \one_{\{z^\star_{ij}\geq 2\}}&\leq
2 \sum_{|i-j|\leq 2\log d} \frac{i\alpha_i + j\beta_j }{\log b_{ij}} \nonumber\\
&+ (32\log d+8)(\alpha+\beta)+ n^{-1/3}\,.\label{eq-w-star-sum}
\end{align}
Before we further analyze the expressions $(i\alpha_i + j\beta_j)/\log b_{ij}$ we wish to narrow down the range of pairs $(i,j)$. First, we can quickly move to $i,j \geq 20$. Indeed, if for instance $i < 20$ then using \eqref{eq-assumption-z-2}
\begin{align*} \sum_{\substack{i < 20 \\ |i-j|\leq 2\log d}} &\frac{i\alpha_i + j\beta_j }{\log b_{ij}} \leq \sum_{\substack{i\leq 19\\ j \leq 19 + 2\log d}} i\alpha_i + j\beta_j \\
&\leq (40\log d + 380)\bigg(\sum_i\alpha_i + \sum_j\beta_j\bigg) \leq \left(80\log d + 760\right)(\alpha+\beta)\,.\end{align*}
An analogous calculation holds for $j<20$, yielding that
\begin{align*} \sum_{\substack{i < 20 \\ |i-j|\leq 2\log d}} &\frac{i\alpha_i + j\beta_j }{\log b_{ij}}
+ \sum_{\substack{j < 20 \\ |i-j|\leq 2\log d}} &\frac{i\alpha_i + j\beta_j }{\log b_{ij}} \leq \left(160\log d + 1520\right)(\alpha+\beta)\,.\end{align*}
The case $20 \leq i,j \leq 4\log d$ is treated similarly:
\begin{align*} \sum_{20\leq i,j \leq 4\log d} \frac{i\alpha_i +j\beta_j}{\log b_{ij}}  &\leq 4\log d (4\log d - 19) \bigg(\sum_i \alpha_i + \sum_j \beta_j\bigg) \\
&\leq (32\log^2 d - 152\log d)(\alpha+\beta) \,.\end{align*}
Plugging the last two equations in \eqref{eq-w-star-sum} and defining
\begin{equation}\label{eq-ij-regime}\Gamma = \left\{ (i,j) : \begin{array}{c} i,j \geq 20\,,\\
|i-j| \leq 2\log d\,,\\
i \geq 4\log d ~\mbox{ or }~ j \geq 4 \log d\,.
 \end{array}
\right\}\end{equation}
it follows that
\begin{align}
\sum_{\substack{i+j>0\\ |i-j| \leq 2\log d}}  &w_{ij}^\star \one_{\{z^\star_{ij}\geq 2\}}\leq
2 \sum_{(i,j)\in\Gamma} \frac{i\alpha_i + j\beta_j }{\log b_{ij}} \nonumber\\
&+ (32\log^2 d + 40\log d + 1528)(\alpha+\beta)+ n^{-1/3}\,.\label{eq-w-star-sum2}
\end{align}
Recalling the definition~\eqref{eq-assumption-z-2} of $b_{ij}$, it now remains to bound $\sum_{(i,j)\in\Gamma}\xi_{ij}$ with $\xi_{ij}$ given by
\[\xi_{ij} = \frac{i\alpha_i + j\beta_j }{\log \left[ \frac{2^{i+j}[(i+2)\alpha_i + (j+2)\beta_j + n^{-1/4}]}{9e(S_i,T_j)}\right]} \leq \frac{i\alpha_i + j\beta_j }{\log\left[ \frac{2^{i+j}(i\alpha_i + j\beta_j)}{9e(S_i,T_j)}\right]}\,.\]
Note that the last inequality (where we reduced the argument of the $\log(\cdot)$)
is only legitimate provided that
\begin{equation}
  \label{eq-log-positive}
  2^{i+j}\frac{i\alpha_i + j\beta_j}{9 e(S_i,T_j)} > 1\,.
\end{equation}
In what follows we will show that this is indeed the case and then proceed
to bound $\sum \xi_{ij}$. This will be achieved by splitting the analysis into two cases, according to the structure of $e(S_i,T_j)$ in the base graph $G$.
Recall that we have 
$e(S_i,T_j) \leq (d/m) s_i t_j + \lambda\sqrt{s_i t_j}$ as $G$ is an $(m,d,\lambda)$-graph.
\begin{list}{\labelitemi}{\leftmargin=1em}
\item \textbf{Case (i):} $e(S_i,T_j) \leq 2(d/m) s_i t_j$

Since $\alpha_i = s_i 2^{-i}$ and $\beta_j = t_j 2^{-j}$, in this case we have
\[ \frac{2^{i+j}}{e(S_i,T_j)} \geq \frac{2^{i+j}}{2(d/m)s_i t_j} = \frac{m}{2d \alpha_i \beta_j}\,.\] With the regime of $(i,j)$ as in \eqref{eq-ij-regime} in mind, suppose first that $i \geq 4\log d$.
It follows that
\begin{align*} \sum_{\substack{i\geq 4\log d\\ j\geq 20\\ |i-j|\leq2\log d}} \xi_{ij} &\leq
\sum_{\substack{i\geq 4\log d\\ j\geq 20\\ |i-j|\leq2\log d}} \frac{i\alpha_i+ j\beta j}{\log\left[ \frac{m(i\alpha_i + j\beta_j)}{18 d \alpha_i \beta_j}\right]} \leq \sum_{\substack{i\geq 4\log d\\ j\geq 20\\ |i-j|\leq2\log d}} \frac{i\alpha_i + j\beta_j}{\log\frac{m j}{18 d \alpha_i}}
\,.
\end{align*}
As $\alpha_i 2^i = s_i \leq m$ we have $\alpha_i \leq m 2^{-i}$ and
plugging in the fact that $j \geq 20$,
\begin{align*}
 \sum_{\substack{i \geq 4\log d \\ j\geq20 \\ |i-j|\leq2\log d}} &\frac{i\alpha_i+j\beta_j}{\log\frac{mj}{18 d\alpha_i}} \leq
 \sum_{\substack{i \geq 4\log d \\ |i-j|\leq2\log d}} \frac{i\alpha_i+j\beta_j}{\log(2^i/d)} \leq
 \sum_{\substack{i \geq 4\log d \\ |i-j|\leq2\log d}} \frac{i\alpha_i+j\beta_j}{\frac34i} \\
&\leq  \frac43 \sum_{|i-j|\leq2\log d}\alpha_i + 2 \sum_{|i-j|\leq2\log d}\beta_j \leq (1+4\log d)(3 \alpha + 4\beta)\,.
\end{align*}
where we used the fact that $j \leq i + 2\log d \leq \frac32 i$ for the above $i,j$.
Note that we have just verified Eq.~\eqref{eq-log-positive} by showing that its left-hand-side
is at least $2^i/d \geq d^3$.
Similarly, if $j \geq 4\log d$ then
\begin{align*}
 \sum_{\substack{j \geq 4\log d \\ i\geq20 \\ |i-j|\leq2\log d}} \xi_{ij} \leq
 (1+4\log d)(4 \alpha + 3\beta)\,.
\end{align*}
Altogether, in this case we have
\begin{align}
\label{eq-heavy-mixing-lemma-case-i}
\sum_{(i,j)\in\Gamma} \xi_{ij} \leq (28\log d + 7)(\alpha+\beta)\,.
\end{align}

\item \textbf{Case (ii):} $e(S_i,T_j) \leq 2\lambda \sqrt{s_i t_j}$

Rewriting the assumption on $e(S_i,T_j)$ in terms of $\alpha_i$ and $\beta_j$, we have
\[ \frac{2^{i+j}}{e(S_i,T_j)} \geq \frac{2^{i+j}}{2\lambda\sqrt{s_i t_j}} = \frac{2^{\frac{i+j}2}}{2\lambda\sqrt{\alpha_i \beta_j}}\,,\] which gives that
\[ \xi_{ij} \leq \frac{i \alpha_i + j \beta_j}{\log\left(\frac{i \alpha_i + j \beta_j}{18\lambda\sqrt{\alpha_i\beta_j}}2^{\frac{i+j}2} \right)} \leq
 \frac{i \alpha_i + j \beta_j}{\log\left(\frac{\sqrt{ij}}{18\lambda} 2^{\frac{i+j}2}\right)}
\leq  \frac{i \alpha_i + j \beta_j}{\log\left(2^{\frac{i+j}2}/\lambda\right)}
\,,\]
where the second inequality was derived from the fact that $x+y \geq 2\sqrt{xy}$ for any $x,y\geq 0$, and the last one by the fact that $i,j\geq 20$.

Notice that if $2^{\frac{i+j}4} \leq \lambda$ then $i,j \leq 4\log d$ and thus $(i,j)\notin\Gamma$.
We therefore have $2^{\frac{i+j}4} > \lambda$ and so
\[ \log\left(2^{\frac{i+j}2}/\lambda\right) > \log\left(2^{\frac{i+j}4}\right) = (i+j)/4\,.\]
This verifies \eqref{eq-log-positive} and further implies that $\xi_{ij} \leq 4 (\alpha_i + \beta_j)$.
Altogether,
\begin{align} \sum_{(i,j)\in\Gamma} \xi_{ij} &\leq 4\sum_{|i-j|\leq 2 \log d} (\alpha_i + \beta_j)
\leq 4(1+4\log d) \Big(\sum_{i}\alpha_i+\sum_j \beta_j\Big) \nonumber\\
&\leq (32\log d + 8 )(\alpha+\beta)\,.\label{eq-heavy-mixing-lemma-case-ii}
\end{align}
%
\end{list}
Combining \eqref{eq-w-star-sum2} with the two cases \eqref{eq-heavy-mixing-lemma-case-i},\eqref{eq-heavy-mixing-lemma-case-ii} for the $\xi_{ij}$'s proves that
\begin{align}
  \sum_{\substack{i+j > 0\\ |i-j|\leq 2\log d}} w^\star_{ij}\one_{\{z_{ij}^\star \geq 2\}} &\leq
(32\log^2 d + 160\log d + 1558)(\alpha+\beta) + n^{-1/3}\nonumber\\
&\leq 74 (\alpha+\beta) \log^2 d + n^{-1/3}\,,    \label{eq-sum-wij-under-z-2}
\end{align}
where in the last inequality we plugged in the fact that $d \geq 320$.

It remains to consider the case $z_{ij}^\star < 2$ where by definition
\[w_{ij}^\star \one_{\{z_{ij}^\star < 2\}} = 18 \frac{e(S_i,T_j)}{2^{i+j}}\,.\]
Since the $(m,d,\lambda)$-graph $G$ satisfies $e(S_i,T_j) \leq (d/m) s_i t_j + \lambda\sqrt{s_i t_j}$,
we have the following two cases:
\begin{list}{\labelitemi}{\leftmargin=1em}
\item \textbf{Case (i):} $e(S_i,T_j) \leq 2(d/m) s_i t_j$

The above bound on $w^\star_{ij}$ then translates into
\[ w^\star_{ij} \one_{\{z_{ij}^\star < 2\}} \leq 36\frac{d}{m}\frac{s_i t_j}{2^{i+j}} = 36\frac{d}{m}\alpha_i\beta_j\,,\]
and summing over all such $i,j$ while recalling that $\sqrt{\alpha\beta} \leq 2m/\lambda$ we get
\begin{align}\sum_{i,j} w^\star_{ij} \one_{\{z_{ij}^\star < 2\}} &
\leq 36\frac{d}{m}\sum_i\alpha_i \sum_j \beta_j \leq 144\frac{d}{m}\alpha\beta  \nonumber\\
&\leq 144\frac{d}{m}\sqrt{\alpha\beta}\frac{2m}{\lambda}  \leq 288\lambda\sqrt{\alpha\beta}\,,\label{eq-xi-case-i}
\end{align}
where we used the inequalities $\sum_i \alpha_i \leq 2\alpha$, $\sum_j \beta_j \leq 2\beta$ and $\lambda\geq\sqrt{d}$.

\item \textbf{Case (ii):} $e(S_i,T_j) \leq 2\lambda \sqrt{s_i t_j}$

Here we have
\[ w^\star_{ij} \one_{\{z_{ij}^\star < 2\}}\leq 36\lambda\frac{\sqrt{s_i t_j}}{2^{i+j}} = \frac{36\lambda}{2^{(i+j)/2}}\sqrt{\alpha_i\beta_j}\,,\]
and so
\begin{align*}\sum_{i,j} &w^\star_{ij} \one_{\{z_{ij}^\star < 2\}} 
\leq 36\lambda\sum_{i,j}\sqrt{\frac{\alpha_i}{2^i}} \sqrt{\frac{\beta_j}{2^j}}\\
&= 36\lambda\sum_{k \geq 0} \sum_{i}\sqrt{\frac{\alpha_i}{2^i}} \sqrt{\frac{\beta_{i+k}}{2^{i+k}}} +
36\lambda\sum_{k > 0} \sum_{j}\sqrt{\frac{\beta_j}{2^j}} \sqrt{\frac{\alpha_{j+k}}{2^{j+k}}}
\,.
\end{align*}
By Cauchy-Schwartz,
\begin{align*}\sum_{k \geq 0}& \sum_{i}\sqrt{\frac{\alpha_i}{2^i}} \sqrt{\frac{\beta_{i+k}}{2^{i+k}}} \leq
\sum_{k \geq 0} \sqrt{\Big(\sum_{i}\frac{\alpha_i}{2^i}\Big)\Big(\sum_i \frac{\beta_{i+k}}{2^{i+k}}\Big)} \\
&\leq \sum_{k\geq 0}\sqrt{2^{-k}\sum_i\alpha_i \sum_j\beta_j}
\leq 2\sqrt{\alpha\beta} \sum_{k\geq 0}2^{-k/2} \leq 7 \sqrt{\alpha\beta}\,,
\end{align*}
and similarly,
\begin{align*}\sum_{k > 0} \sum_{j}\sqrt{\frac{\beta_j}{2^j}} \sqrt{\frac{\alpha_{j+k}}{2^{j+k}}} \leq
 7 \sqrt{\alpha\beta}\,.
\end{align*}
We deduce that in this case
\begin{equation} \sum_{i,j} w^\star_{ij} \one_{\{z_{ij}^\star < 2\}} 
\leq 504 \lambda\sqrt{\alpha\beta}\,.\label{eq-xi-case-ii}\end{equation}
\end{list}
Adding the bounds obtained for the two cases \eqref{eq-xi-case-i},\eqref{eq-xi-case-ii} gives
\begin{align*}
  \sum_{i,j} w^\star_{ij} \one_{\{z_{ij}^\star < 2\}}\leq 792 \lambda\sqrt{\alpha\beta} \,.
\end{align*}
The proof of Lemma~\ref{lem-wij-star} is now concluded by combining the above bound with \eqref{eq-sum-wij-under-z-2}.
\end{proof}
Notice that for proving Proposition~\ref{prop-e(A,B)-lift-bound} we can assume that $d \geq 320$, since otherwise
$d < 5 \log^2 d $ and the statement immediately follows from the trivial bound \[e(A,B)\leq d(|A| \wedge |B|) \leq \frac{d}2(|A|+|B|) < \frac52(|A|+|B|)\log^2 d\,.\]

For $d \geq 320$ we can apply Lemma~\ref{lem-wij-star} (recalling the discussion preceding this lemma) combined with
\eqref{eq-e(A,B)-via-wij} and obtain that, except with probability $n^{-100}$,
every two subsets $A,B$ with $\alpha\beta\leq (2m/\lambda)^2$ satisfy
\begin{equation*}
  e(A,B)/n \leq 802\lambda\sqrt{\alpha\beta} + 74 (\alpha+\beta)\log^2 d + 2(\alpha + \beta) + n^{-1/3}\,.
\end{equation*}
When the subsets $A,B$ satisfy in addition
\[
  |A|+|B| \geq n^{2/3}\]
then $\alpha+\beta \geq n^{-1/3}$ and the above bound (for $d\geq 320$) translates to
\begin{equation}
    \label{eq-large-sets-bound}
 e(A,B)/n \leq 802\lambda\sqrt{\alpha\beta} + 75 (\alpha+\beta)\log^2 d \,.
\end{equation}
Altogether, we have established the statement of Proposition~\ref{prop-e(A,B)-lift-bound} under the additional
assumption~$|A|+|B|\geq n^{2/3}$ for the subsets $A,B$ in mention.

The separate case of $|A|+|B| < n^{2/3}$ is much simpler to handle, and is treated by the next claim
using a standard first moment argument.
\begin{claim}\label{cl-small-A,B}
Let $G$ be an arbitrary graph on $m$ vertices and let $H$ be a random $n$-lift of $G$ with $n\geq m$.
Then with probability $1-O(n^{-100})$, every two subsets $A,B\subset V(H)$ of size $|A|+|B|\leq n^{2/3}$ have
$e(A,B)\leq 50\big(|A|+|B|\big)$.
\end{claim}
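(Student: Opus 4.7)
The plan is a first-moment argument, separating two regimes according to $s:=|A|+|B|$. When $s\le 200$ the deterministic bound $e(A,B)\le|A|\cdot|B|\le s^2/4\le 50s$ already gives the claim, and this range must be handled separately since the probabilistic estimate below degenerates for very small $s$.

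For $200<s\le n^{2/3}$ we carry out a union bound over all ordered pairs $(A,B)\subset V(H)^2$ with $|A|+|B|=s$, whose total number is $\binom{2mn}{s}\le(2en^2/s)^s$ by the Vandermonde identity (using $m\le n$). For any fixed such pair, the number of potential lift-edges between $A$ and $B$---unordered pairs $\{v,w\}$ with $v\in A$, $w\in B$ whose fibers are adjacent in $G$---is crudely bounded by $|A|\,|B|\le s^2/4$. Reusing the $(3/n)^k$ estimate established inside the proof of Proposition~\ref{prop-e(A,B)-lift-bound}, the probability that any $k:=50s$ prescribed potential pairs are all realized as edges of $H$ is at most $(3/n)^{50s}$, provided no single fiber-pair receives more than $2n/3$ of them; this constraint is automatic here since $k\le 50n^{2/3}\le 2n/3$ for sufficiently large $n$. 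Hence
\begin{align*}
\P\bigl(e(A,B)\ge 50s\bigr)\le\binom{s^2/4}{50s}\Bigl(\frac{3}{n}\Bigr)^{50s}\le\Bigl(\frac{3es}{200\,n}\Bigr)^{50s}.
\end{align*}

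Combining this bound with the union-bound factor and collecting powers via $s\le n^{2/3}$, the overall probability that some $(A,B)$ of total size $s$ violates $e(A,B)<50s$ is bounded by $[C\,n^{-46/3}]^{s}$ for an absolute constant $C$. For every $s>200$ this is at most $n^{-O(s)}$ with exponent far exceeding $100$, so summing over $s$ yields a failure probability of $O(n^{-100})$, in fact with substantial room to spare. No genuinely new idea is needed; the only subtlety is the isolated verification of the $k\le 2n/3$ condition for the $(3/n)^k$ bound, which is automatic in the regime $s\le n^{2/3}$.
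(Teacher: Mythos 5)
Your argument is essentially correct and follows the same first-moment strategy as the paper's proof, differing only in organization: you union directly over ordered pairs $(A,B)$ with $|A|+|B|=s$, while the paper passes to $R=A\cup B$ and observes that $e(A,B)\ge 50(|A|+|B|)$ forces $e(R)\ge 25|R|$, which in particular forces $|R|\ge 50$ and thereby disposes of the small-$s$ regime without a separate case (your deterministic treatment of $s\le 200$ accomplishes the same thing). One small point of care: the paper's $e(A,B)$ counts \emph{ordered} pairs $(a,b)\in A\times B$ with $ab\in E(H)$ (this is why its proof writes $e(R)\ge e(A,B)/2$), so $e(A,B)\ge 50s$ only guarantees at least $25s$ distinct unordered edges between $A$ and $B$ when $A\cap B\ne\emptyset$; the binomial coefficient and the exponent on $(3/n)$ should therefore be $25s$ rather than $50s$. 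With that correction the per-$s$ factor becomes $[Cn^{-7}]^s$ (matching the paper's $O(n^{-7})$ base) instead of $[Cn^{-46/3}]^s$, and summing over $201\le s\le n^{2/3}$ still gives a failure probability of order $n^{-1400}$, far below $n^{-100}$. So this is a cosmetic fix, not a structural flaw.
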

\begin{proof}
Suppose that $A,B$ are two subsets that satisfy $|A|+|B|\leq n^{2/3}$ and $e(A,B) \geq 50(|A|+|B|)$,
and consider their union $R = A\cup B$.
Clearly, \[|R| \leq |A|+|B| \leq n^{2/3}\] whereas the number of edges in the induced subgraph on $R$ satisfies \[e(R) \geq e(A,B)/2 \geq 25 |R|\,.\]
As argued below Eq.~\eqref{eq-cut-diff-geq-logd}, if $(i_1 j_1,i'_1 j'_1),\ldots,(i_k j_k,i'_k j'_k)$
are $k$ arbitrary distinct pairs of vertices in $H$ of which no $\frac23 n$ vertices share the same fiber,
\[\P\left( i_l j_l \sim i'_l j'_l\mbox{ for all }l\in[k]\right) \leq (3/n)^k\]
(that argument applies to any base-graph $G$ by definition of the $n$-lift). Consider $k$ distinct pairs of vertices in $R$ that may potentially be adjacent in $H$. Clearly, for large enough $n$ these do not contain
any $\frac23 n$ points on the same fiber since $|R| = o(n)$, hence the probability that they are all adjacent is at most $(3/n)^k$.

It now follows that the probability there exists a subset $R\subset V(H)$ of size $|R|=r \leq n^{2/3}$ with $e(R) \geq 25 r$ is at most
\[ \binom{mn}r \binom{\binom{r}2}{25r} \left(\frac3n\right)^{25r} \leq
\left(\frac{\mathrm{e}mn}r \left(\frac{3\mathrm{e}r}{50 n}\right)^{25} \right)^r \leq \left(c m (r/n)^{24}\right)^r
\]
where $c>0$ is an absolute constant. As $r \leq n^{2/3}$ and $m \leq n$, the base of the exponent in the last expression is at most $O(n^{-7})$ whereas $r \geq 50$ necessarily to allow $e(R) \geq 25r$. Summing this error probability over the $n^{2/3}$ possible values of $r$ completes the proof.
\end{proof}
As the bound given in the above claim is clearly smaller than the bound
\eqref{eq-large-sets-bound} for $d\geq 320$, together they imply that \eqref{eq-large-sets-bound}
holds with probability $1-O(n^{-100})$ for any two sets $A,B$ with $|A||B|\leq(2mn/\lambda)^2$.
This concludes the proof of Proposition~\ref{prop-e(A,B)-lift-bound}.
\qed

\section{Light pairs and epsilon-nets}\label{sec:light}
We now move on to estimating the expected total of all light pairs along the edges of $H$.
Recall that a pair $x_{ij},y_{i'j'}$ is \emph{light} if $|x_{ij}y_{i'j'}| < \lambda/mn$.
To bound the bilinear form $x^\tr A_H\, y$ with respect to the light pairs, we will approximate each such vector using an $\epsilon$-net, where
\[\epsilon = \frac{1}{d\sqrt{mn}}\,.\]
More precisely, we consider the $mn$-dimensional lattice $\cL = (\epsilon \Z)^{mn}$, and show that the required statement on the bilinear form holds for any two vectors $x,y$ with norm at most $1$ in this lattice.
\begin{theorem}\label{thm-light}
Let $G$ be an $(m,d,\lambda)$-graph for $d\geq 3$ and $\lambda \geq \sqrt{d}$ and let $H$ be a random $n$-lift of $G$.
For $x,y\in\R^{mn}$, let $R_l(x,y)$ be the random variable
\[ R_l(x,y) = \sum_{ij \sim i'j'} x_{ij}y_{i'j'} \one_{\{|x_{ij}y_{i'j'}|< \lambda/mn\}} \,.\]
Let $\cL$ denote the $mn$-dimensional lattice $\left(\frac{1}{d\sqrt{mn}}\Z\right)^{mn}$.
Then except with probability $O(\exp(-mn))$, every $x,y\in\cL$ with $\|x\|\leq 1$ and $\|y\|\leq1$ satisfy
$|R_l(x,y) - \E [R_l(x,y)]| \leq 250\, \lambda \log d$.
\end{theorem}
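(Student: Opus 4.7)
The plan follows the standard $\epsilon$-net strategy: first establish strong concentration of $R_l(x,y)$ about its mean for a single fixed pair $(x,y)$, then union bound over all lattice points $x,y \in \cL$ with $\|x\|,\|y\|\leq 1$. A standard volume estimate for the lattice, using spacing $\epsilon = 1/(d\sqrt{mn})$ and the volume of the unit ball in $\R^{mn}$, shows that the number of relevant lattice points is at most $\exp(O(mn\log d))$; hence I need a per-pair failure probability of $\exp(-\Omega(mn\log d))$ with a sufficiently large implicit constant to beat both the $|\cL|^2$ union-bound count and the desired final error $\exp(-mn)$.

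For a fixed pair $x, y \in \cL$ with $\|x\|,\|y\|\leq 1$, I would set up a Doob martingale by revealing the random bijections $\{\pi_{uv}\}_{uv \in E(G)}$ one match at a time in some prescribed order, producing a filtration $\cF_0 \subset \cdots \subset \cF_N$ with $N = dmn/2$ and martingale $M_k = \E[R_l(x,y) \mid \cF_k]$. Each revelation swaps at most one edge of $H$ against the conditional expectation, and by the definition of a \emph{light} pair each such edge contributes at most $\lambda/(mn)$ in absolute value to $R_l$; hence the martingale differences satisfy the uniform bound $|M_k - M_{k-1}| \leq b$ with $b = O(\lambda/(mn))$.

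With this step-size bound in hand, the next and most delicate task is to estimate the quadratic variation $V = \sum_k \E[(M_k - M_{k-1})^2 \mid \cF_{k-1}]$, as referenced in the outline's ``$L^2$ analysis of the increments.'' Using the lightness constraint $|x_{ij}y_{i'j'}| < \lambda/(mn)$, the degree structure of $H$, and the Expander Mixing Lemma for $G$ to control edge counts $e_G(S,T)$ between fibers contributing to the variance, one can bound $V$ tightly enough that Freedman's Bernstein-type inequality
\[\P(|M_N - M_0| > t) \leq 2\exp\!\left(-\frac{t^2/2}{V + bt/3}\right)\]
yields an exponent $\Omega(mn\log d)$ at $t = 125\lambda\log d$, once the implicit constants are tuned. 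The $bt/3$ term contributes only a bounded factor since $bt = O(\lambda^2\log d/(mn))$, and the dominating piece of the exponent comes from the ratio $t/b = \Omega(mn\log d)$.

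Finally, union bounding this per-pair estimate over the $\exp(O(mn\log d))$ pairs $(x,y)$ in the net and choosing the constant $125$ large enough to absorb the union bound yields the desired $O(\exp(-mn))$ failure probability, with $|R_l(x,y) - \E R_l(x,y)| \leq 250\lambda\log d$ simultaneously for all such $x,y$. The main obstacle I anticipate is the quadratic-variation bound in the third step: the without-replacement structure of the random bijections creates long-range dependencies between revealed matches, so controlling the accumulated $L^2$ variation while preserving the crucial factor of $\lambda/(mn)$ from the light-pair threshold requires careful combinatorial bookkeeping, likely reusing edge-counting tools of the kind developed in Section~\ref{sec:heavy} but now in an $x,y$-weighted form.
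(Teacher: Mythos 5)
Your high-level plan is the same as the paper's: set up a Doob martingale by revealing matches one at a time, bound the increments in $L^\infty$ by $O(\lambda/mn)$ using the light-pair cap, control the quadratic variation, apply Freedman's inequality, and union-bound over an $\epsilon$-net whose cardinality is $\exp(O(mn\log d))$. Your accounting of the Freedman exponent is also right: at $t=a\lambda\log d$ the linear term $bt$ is comparable to (and for $a\geq 125$ dominates) $V$, giving an exponent of order $t/b=\Theta(amn\log d)$, which comfortably absorbs the net.

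However, you yourself flag the quadratic-variation bound as the unresolved step, and this is precisely where the paper needs two specific devices that your sketch does not identify. First, the paper reveals the matches within each fiber pair $j\sim j'$ in \emph{decreasing order of $|x_{ij}|$}; this ordering is essential in the case analysis for the conditional variance (when the newly revealed pair is light, all subsequently revealed pairs on that fiber are automatically light, and $|x_t|$ dominates the average of the remaining $|x_z|$'s), yielding $\var(S_t-S_{t-1}\mid \cF_{t-1}) \lesssim \frac{1}{n-t+1}\sum_{i'\geq t}x_t^2y_{i'}^2 L_{t,i'} + \frac{1}{(n-t+1)(n-t)}\sum_{z>t}\sum_{i'\geq t}x_z^2y_{i'}^2L_{z,i'}$. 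Second, the coefficients $\frac{1}{n-t+1}$ and $\frac{1}{(n-t+1)(n-t)}$ blow up as $t\to n$, so the paper imposes \emph{sparsity} — at most $n/2$ nonzero $x$-entries per fiber — to guarantee $q\leq n/2$ and hence keep these coefficients of order $1/n$ and $1/n^2$; it then recovers the general case by splitting $x=x'+x''$ into two sparse halves (entries with $i\leq n/2$ and $i>n/2$) and using $R_l(x,y)=R_l(x',y)+R_l(x'',y)$. Without both the ordering and the sparsity split, the without-replacement dependencies you correctly anticipate would make the accumulated variance bound fail near the tail of each fiber's revelation. The Expander-Mixing-Lemma weighted sum you mention (bounding $\sum_{j\sim j'}\sum_{i,i'}x_{ij}^2y_{i'j'}^2\one_{\{|x_{ij}y_{i'j'}|<\lambda/mn\}}$ by $O(\lambda^2\log d/m)$) is indeed the other input, but converting that into a martingale quadratic-variation bound is exactly where these two tricks are needed, so your proposal, while pointing in the right direction, leaves the central lemma unproved.
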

\begin{proof}
In order to establish the above concentration result, we must first estimate the variance of $R_l(x,y)$.
\begin{lemma}\label{lem-variance-Rl}
Let $G$ be an $(m,d,\lambda)$-graph, and let $x,y\in\R^{mn}$ be two fixed vectors satisfying $\|x\|\leq 1$ and $\|y\|\leq 1$.
Then \[\sum_{i,i'}\sum_{j\sim j'} x_{ij}^2 y_{i'j'}^2 \one_{\{|x_{ij}y_{i'j'}|<\lambda/mn\}} \leq 50 \frac{\lambda^2 \log d}m\,.\]
\end{lemma}
\begin{proof}
As in the treatment of the heavy pairs, we consider the following dyadic expansion of $x$ and $y$:
\begin{align*}
\cD_\ell = &\bigg\{(i,j):  2^{-\ell} \leq |x_{ij}| \sqrt{\frac{mn}{\lambda}}< 2^{-\ell+1}\bigg\}&(\ell \in \Z)\,,\\
\cD'_{\ell'} = &\bigg\{(i',j'): 2^{-\ell'}\leq |y_{i'j'}| \sqrt{\frac{mn}{\lambda}}< 2^{-\ell'+1}\bigg\}&(\ell' \in \Z)\,,
\end{align*}
and the assumptions $\|x\|\leq 1$ and $\|y\|\leq 1$ translate into
\begin{align}\label{eq-L-ell-l2-bound}
  \sum_\ell 4^{-\ell} |\cD_\ell| \frac{\lambda}{mn} \leq 1\,,\quad
  \sum_{\ell'} 4^{-\ell'} |\cD'_{\ell'}| \frac{\lambda}{mn} \leq 1\,.
\end{align}
Further note that, if $ij\in\cD_\ell$ and $i'j'\in \cD'_{\ell'}$ then a necessary condition for
$|x_{ij} y_{i'j'}| < \lambda/mn$ is that $\ell+\ell'> 0$.

Consider the graph $G'$ where every two fibers that are connected in $G$ have a complete bipartite graph between them in $G'$. That is, $ij\sim i'j'$ in $G'$ if $jj'\in E(G)$.
It follows that
\begin{align}
\sum_{j\sim j'} \sum_{i,i'} x_{ij}^2 y_{i'j'}^2 &\one_{\{|x_{ij}y_{i'j'}|< \lambda/mn\}}\leq
 16\sum_{\ell+\ell'> 0} 4^{-(\ell+\ell')} e_{G'}(\cD_\ell,\cD'_{\ell'}) \Big(\frac{\lambda}{mn}\Big)^2
\,, \label{eq-light-pairs-dyadic}
\end{align}
and we aim to bound the sum in the right-hand-side by at most $3\frac{\lambda^2 \log d}m$.

The adjacency matrix of $G'$ is therefore precisely $A_G \otimes J_n$, where $J_n$ is the all-ones matrix of order $n$ and $\otimes$ denotes tensor product, and so by the definition of $G$ (and the properties of tensor products) it follows that $G'$ is an $(mn,dn,\lambda n)$-graph.
As such, for any two subsets $A,B$ of its vertices,
\begin{equation}
  \label{eq-exp-mixing-tensor-graph}
  e_{G'}(A,B) \leq \frac{d}m |A||B| + \lambda n \sqrt{|A||B|}\,.
\end{equation}

We now separate the sum in \eqref{eq-light-pairs-dyadic} into two cases, comparing $|\ell-\ell'|$ to
\[ D = \log \Big(\frac{d}{\lambda \log d}\Big) + 2 \leq \frac12\log d - 1\,.\]
To justify the last inequality, note first that we may assume that $d \geq 256$
otherwise the statement of the lemma holds trivially. Indeed,
since there are $dm/2$ edges in $G$, summing over $\frac12 dmn^2$ pairs, each of which contributes at most $(\lambda/mn)^2$, gives
at most $\frac{\lambda^2 d}{2m}$. For $d \leq 256$ we have $d \leq 32\log d$ and so this is clearly at most $16 \frac{\lambda^2\log d}m$ and we are done.
Assume therefore that $d\geq 256$, in which case $D+1 = \log\big(\frac{8d}{\lambda \log d}\big) \leq \log(d/\lambda)$ and the above inequality follows from the fact that $\lambda \geq \sqrt{d}$.

In case $D \leq 0$ we have $d \leq \frac14\lambda\log d$. Here, applying the trivial bound $e_{G'}(\cD_\ell,\cD'_{\ell'}) \leq d n |\cD_\ell|$ gives
\begin{align*}
  \sum_{\substack{\ell+\ell'> 0\\ \ell' \geq \ell}} 2^{-(\ell+\ell')}
  e_{G'}(\cD_{\ell},\cD'_{\ell'})\Big(\frac{\lambda}{mn}\Big)^2 &\leq \frac{\lambda}{mn} \sum_{\ell}4^{-\ell} \cdot d n|\cD_\ell|\cdot \frac{\lambda}{mn}
  \sum_{\ell' \geq \ell} 2^{-(\ell'-\ell)} \\
  &\leq \frac{2\lambda d}{m} \leq \frac{\lambda^2 \log d}{2m}\,,
\end{align*}
where the inequality between the two lines used \eqref{eq-L-ell-l2-bound}. Performing the same calculation for the sum over $\ell \geq \ell'$ gives the same bound. Altogether these two bounds sum up to $\frac{\lambda^2\log d}m$ and we get that for $D \leq 0$
\begin{align*}
\sum_{\ell+\ell'>0} &4^{-(\ell+\ell')}
 e_{G'}(\cD_{\ell},\cD'_{\ell'})\Big(\frac{\lambda}{mn}\Big)^2 \\
 &\leq  \frac12 \sum_{\ell+\ell'>0} 2^{-(\ell+\ell')}
 e_{G'}(\cD_{\ell},\cD'_{\ell'})\Big(\frac{\lambda}{mn}\Big)^2 \leq \frac{\lambda^2\log d}{2m}
 \end{align*}
 (the factor of $\frac12$ in the first inequality due to the fact that $\ell+\ell' > 0$).
 Thus, Eq.~\eqref{eq-light-pairs-dyadic} translates this bound to $8\frac{\lambda^2 \log d}m$ and confirms the statement of the lemma for the case $D \leq 0$.
It remains to handle $D > 0$.
\begin{list}{\labelitemi}{\leftmargin=1em}
\item \textbf{Case (i):} $|\ell-\ell'| \geq D$

In this case, we use the trivial bound $e_{G'}(\cD_\ell,\cD'_{\ell'}) \leq d n |\cD_\ell|$, giving
\begin{align*}
&  \sum_{k \geq D}  \sum_{\substack{\ell+\ell'> 0 \\ \ell'-\ell=k}} 4^{-(\ell+\ell')} e_{G'}(\cD_\ell,\cD'_{\ell'}) \Big(\frac{\lambda}{mn}\Big)^2 \leq
  \sum_{k \geq D} \sum_{\substack{\ell+\ell' > 0 \\ \ell'-\ell = k}} 2^{-4\ell-2k} dn |\cD_\ell|  \Big(\frac{\lambda}{mn}\Big)^2  \\
  &=  \sum_\ell 4^{-\ell} \frac{\lambda}{mn} |\cD_\ell| \sum_{\substack{k \geq D\\ \ell+(k+\ell)> 0}} 2^{-(2\ell+k)} (2^{-k} d) \frac{\lambda}m \leq \sum_\ell 4^{-\ell} \frac{\lambda}{mn} |\cD_\ell| \frac{\lambda^2\log d}{4m} \,,
\end{align*}
where we used the facts that $2^{k} \geq 4d/(\lambda\log d)$ for $k \geq D$, and that $\sum 2^{-(2\ell+k)} \leq 1$. By \eqref{eq-L-ell-l2-bound}, it now follows that
\begin{align*}
  \sum_{k \geq D}  \sum_{\substack{\ell+\ell'> 0 \\ \ell'-\ell=k}} 4^{-(\ell+\ell')} e_{G'}(\cD_\ell,\cD'_{\ell'}) \Big(\frac{\lambda}{mn}\Big)^2 \leq \frac{\lambda^2\log d}{4m}\,,
\end{align*}
and adding the symmetric case where we sum over $\ell-\ell'=k$, we get
\begin{align*}
\sum_{\substack{\ell+\ell'> 0 \\ |\ell'-\ell| \geq D}} &4^{-(\ell+\ell')}  e_{G'}(\cD_\ell,\cD'_{\ell'}) \Big(\frac{\lambda}{mn}\Big)^2 \leq \frac{\lambda^2\log d}{2m}\,.
\end{align*}

\item \textbf{Case (ii):} $|\ell-\ell'| < D$

Here we have two bounds according to the two expressions in the upper bound \eqref{eq-exp-mixing-tensor-graph}.
That is, we break $e_{G'}(\cD_\ell,\cD_{\ell'})$ into the sum of the two expressions corresponding to $\frac{d}m |\cD_\ell||\cD_{\ell'}|$
and to $\lambda n \sqrt{|\cD_\ell||\cD_{\ell'}|}$ and bound each of them separately.

First, by~\eqref{eq-L-ell-l2-bound} the sum corresponding to $\frac{d}m |\cD_\ell||\cD_{\ell'}|$ contributes
\begin{align*}
   \sum_{0\leq k < D}  &\sum_{\substack{\ell+\ell'> 0 \\ \ell'-\ell=k}} 4^{-(\ell+\ell')} \frac{d}m |\cD_\ell||\cD'_{\ell'}| \Big(\frac{\lambda}{mn}\Big)^2 \\
   &\leq
  \lceil D \rceil \frac{d}m \sum_{\ell} 4^{-\ell} \frac{\lambda}{mn} |\cD_\ell|
   \sum_{\ell'} 4^{-\ell'} \frac{\lambda}{mn} |\cD'_{\ell'}|\leq \lceil D\rceil \frac{d}m \leq \frac{\lambda^2 \log d}{2m}\,,
\end{align*}
where the last inequality is due to the facts $\lambda^2 \geq d$ and $\lceil D\rceil \leq \frac12 \log d$.
Second, the sum corresponding to $\lambda n\sqrt{|\cD_\ell||\cD_{\ell'}|}$ contributes
\begin{align*}
   \sum_{0\leq k < D}  &\sum_{\substack{\ell+\ell'> 0 \\ \ell'-\ell=k}} 4^{-(\ell+\ell')} \lambda n \sqrt{|\cD_\ell||\cD'_{\ell'}|} \Big(\frac{\lambda}{mn}\Big)^2 \\
   &\leq
   \lceil D\rceil \frac{\lambda^2}m \sum_{\substack{\ell+\ell'> 0}}
   \sqrt{2^{-2\ell}\frac{\lambda}{mn} |\cD_\ell|}
   \sqrt{2^{-2\ell'}\frac{\lambda}{mn} |\cD'_{\ell'}|} 2^{-(\ell+\ell')}    \\
   &\leq \frac{\lceil D\rceil}2 \frac{\lambda^2}m
   \sqrt{\sum_\ell 2^{-2\ell}\frac{\lambda}{mn} |\cD_\ell|}
   \sqrt{\sum_{\ell'} 2^{-2\ell'}\frac{\lambda}{mn} |\cD'_{\ell'}|} \leq \frac{\lceil D\rceil}2 \frac{\lambda^2}m \leq \frac{\lambda^2 \log d}{4m} \,,
\end{align*}
where the first inequality of the last line followed from Cauchy-Schwartz and the last one used the fact $\lceil D\rceil \leq \frac12\log d$. The last two inequalities now give a combined bound of $\frac{3\lambda^2\log d}{4m}$.
As the same holds for the sum over $\ell-\ell'=k$, altogether we have
\[\sum_{\substack{\ell+\ell'> 0 \\ |\ell'-\ell|<D}} 4^{-(\ell+\ell')} e_{G'}(\cD_\ell,\cD'_{\ell'}) \Big(\frac{\lambda}{mn}\Big)^2
\leq \frac{3\lambda^2 \log d}{2m} \,.\]
\end{list}
Adding together Cases~(i),(ii) implies that when $D \geq 0$
\[\sum_{\ell+\ell'> 0 } 4^{-(\ell+\ell')} e_{G'}(\cD_\ell,\cD'_{\ell'}) \Big(\frac{\lambda}{mn}\Big)^2 \leq \Big(\frac12+\frac32\Big)\frac{\lambda^2\log d}{m} = \frac{2\lambda^2\log d}m\,,\]
 and \eqref{eq-light-pairs-dyadic} now translates this bound to $32\frac{\lambda^2 \log d}m$, confirming the statement of the lemma (with room to spare) for the case $D > 0$ as required.
\end{proof}
Next, we need to address the support of $x$.
A vector $x\in \R^{mn}$ is called \emph{sparse} if it has at most $n/2$ non-zero entries on each fiber, that is, if
\[ \left|\{i\in[n] : x_{ij} \neq 0\}\right| \leq n/2\quad\mbox{ for all }j\in V(G)\,.\]
The next lemma establishes concentration for $R_l(x,y)$ provided that $x$ is sparse.
\begin{lemma}\label{lem-2-fixed-sparse}
Let $x,y\in\R^{mn}$ be two fixed vectors such that $x$ is sparse, $\|x\|\leq 1$ and $\|y\|\leq 1$.
Let $G$ be an $(m,d,\lambda)$-graph for $d\geq 3$ and $\lambda \geq \sqrt{d}$ and let $H$ be a random $n$-lift of $G$.
For any $a \geq 125$,
\[ \P\left(\left|R_l(x,y)-\E [R_l(x,y)]\right| > a \lambda \log d \right) \leq 2d^{-amn/12}\,.\]
\end{lemma}
\begin{proof}
For any $jj'\in E(G)$ and $i\in[n]$, define
\begin{align*}
  X_{jj'}(i) &=  x_{ij} \sum_{i'=1}^n\one_{\{ij\sim i'j'\}}y_{i'j'} \one_{\{|x_{ij}y_{i'j'}|<\frac{\lambda}{mn}\}}\,.
\end{align*}
By this definition,
\begin{equation*}
R_l(x,y) = \sum_{j\sim j'} \sum_i X_{jj'}(i)\,,
\end{equation*}
and we can now expose the values of $X_{jj'}$ sequentially by going over the pairs of fibers $jj'\in E(G)$ one-by-one (in an arbitrary order), and for each such pair revealing the relevant part of the bijection between the fibers. More precisely, when processing a given pair of fibers $j\sim j'$, we proceed as follows:
\begin{enumerate}
  \item Without loss of generality, suppose $|x_{1j}| \geq |x_{2j}| \geq \ldots \geq |x_{nj}|$, and let $q\in[n]$ be the largest index such that $x_{qj} \neq 0$.
  \item Sequentially go over $i=1,\ldots,q$ and expose the neighbor of $ij$ in the bijection between the two fibers, i.e., $J_i\in [n]$ such that $ij \sim J_i j'$, thereby determining $X_{j j'}(i)$.
\end{enumerate}
Crucially, since the vector $x$ is sparse, it contains at most $n/2$ non-zero entries in any given fiber, and so in the above defined process $q \leq n/2$.

Denote by $(\cF_t)$ the filter corresponding to this process (that is, $\cF_t$ is the $\sigma$-algebra generated by the first $t$ exposed edges), and let $(S_t)$ be Doob's martingale corresponding to the function $R_l(x,y)$ with respect to $(\cF_t)$:
\[ S_t = \E\bigg[\sum_{j\sim j'}\sum_i X_{jj'}(i) \given \cF_{t} \bigg] \,.\]
As usual, $S_0 = \E[R_l(x,y)]$ whereas at the end of the process the martingale equals $R_l(x,y)$.

We wish to analyze the increment $S_t - S_{t-1}$. Suppose that in step $t$ we are now exposing an edge between the fibers $j\sim j'$. Clearly,
if $(k,k') \neq (j,j')$ and our process already exposed the edges between the fibers $k \sim k'$, then their contribution is canceled in $S_t-S_{t-1}$. Furthermore, if the edges between $k \sim k'$ are to be exposed in the future, then
\[ \E\bigg[\sum_i X_{kk'}(i)\given  \cF_{t}\bigg] =   \E\bigg[\sum_i X_{kk'}(i)\given  \cF_{t-1}\bigg]\,,\]
since the bijections between distinct pairs of connected fibers are independent. That is to say, $S_t - S_{t-1}$ contains only terms that arise from the effect of the edge exposed at time $t$ on $\E \left[\sum_i X_{jj'}(i)\given \cF_t \right] $.

In light of this, it suffices to treat the case where $j\sim j'$ is the first pair of fibers processed, and the analysis of $S_t - S_{t-1}$ will hold analogous for any other pair.
In what follows, since we are now concentrating solely on the two fibers $j\sim j'$, we omit the subscripts $j,j'$ from $x$ and $y$ to simplify the notation. Similarly, we use the abbreviation
\begin{equation}
  \label{eq-def-L-ii'}
  L_{i,i'} = \one_{\{|x_{ij}y_{i'j'}|<\frac{\lambda}{mn}\}}\,.
\end{equation}
At step $t =1,2,\ldots,q$ we are therefore exposing the match of $x_t$. For simplicity we will analyze $S_1-S_0$ and by merely changing the indices the same argument would carry to all other values of $t$.
Recall that \begin{equation}\label{eq-S0}
S_0 = \frac{1}{n!} \sum_{\pi} \sum_i x_i y_{\pi(i)} L_{i,\pi(i)} = \frac1n \sum_i \sum_{i'} x_i y_{i'} L_{i,i'}
\end{equation}
and that given the event that $x_1$ is matched to some $I \in [n]$ we have
\begin{align}\label{eq-S1}
S_1 &= x_1 y_I L_{1,I} + \frac{1}{(n-1)!} \sum_{\pi: \pi(1)=I} \sum_{i\geq 2} x_i y_{\pi(i)} L_{i,\pi(i)} \nonumber\\
&= x_1 y_I L_{1,I} + \frac1{n-1} \sum_{i\geq 2} \sum_{i' \neq I} x_i y_{i'} L_{i,i'}\,.
\end{align}
 Let $S_1$ and $\tilde{S}_1$ be two possible values after revealing the match of $x_1$, denoting its index by $\pi(1)$ and $\tilde{\pi}(1)$ respectively. We can now couple the distributions over the remaining entries of $\pi$ and $\tilde{\pi}$ via switching $\pi(1),\tilde{\pi}(1)$. That is, if we let $z = \tilde{\pi}^{-1}(\pi(1))$ then $\pi,\tilde{\pi}$ agree everywhere except possibly on $\{1,z\}$ and there we have
\[ \pi(z) = \tilde{\pi}(1)\,,\quad \tilde{\pi}(z) = \pi(1)\,.\]
Clearly, in any pair of coupled $\pi,\tilde{\pi}$ all summands of the form $x_i y_{i'} L_{i,i'}$ in Eq.~\eqref{eq-S1} cancel from $S_1-\tilde{S}_1$ except when
$i\in\{1,z\}$, hence
\begin{align*}
S_1 - \tilde{S}_1  &= x_1 y_{\pi(1)}L_{1,\pi(1)} + \frac{1}{n-1}\sum_{z\neq 1} x_z y_{\tilde{\pi}(1)} L_{z,\tilde{\pi}(1)} \\
                   & - x_1 y_{\tilde{\pi}(1)}L_{1,\tilde{\pi}(1)} - \frac1{n-1} \sum_{z\neq 1} x_z y_{\pi(1)} L_{z,\pi(1)} \,.
\end{align*}
By definition~\eqref{eq-def-L-ii'} each of the above terms $x_i y_{i'} L_{i,i'}$ is at most $\lambda/mn$ in absolute value, thus repeating this argument for any step $t$ gives that with probability $1$,
\begin{equation}
  \label{eq-st-diff-L-inf}
  |S_t - S_{t-1}| \leq \frac{4\lambda}{mn}\quad\mbox{ for $t=1,\ldots,q$}\,.
\end{equation}
Obtaining an $L^2$ bound on the increments $S_t-S_{t-1}$ is slightly more delicate then the above $L^\infty$ bound.
To this end, we will write $S_1-S_0$ explicitly: Recall from~\eqref{eq-S0},\eqref{eq-S1} that $S_0$ averages over permutations $\pi$ on $[n]$ whereas
 $S_1$ averages over all such permutations $\tilde{\pi}$ that have $\tilde{\pi}(1)=I$ for some $I \in [n]$, which is exposed in $\cF_1$
and identifies the match of $x_1$.

In other words, $S_0$ is the mean of sums analogous to $S_1$ with all possible values $\pi(1)\in\{1,\ldots,n\}$ replacing $I$. Each such value has equal probability and the case $\pi(1)=I$ does not contribute to $S_1-S_0$. In the remaining cases we can
go over the possible values of $z=\pi^{-1}(I) \in\{2,\ldots,n\}$ (each with equal probability) and
couple $\pi,\tilde{\pi}$ using the switching that was used to establish the $L^\infty$ bound, letting them agree everywhere except on $\{1,z\}$.
 Altogether we obtain that
\begin{align*}
S_1 - S_0 &= x_1 y_I L_{1,I} -  \frac{1}{n-1}\sum_{z\neq 1} x_z y_I L_{z,I}\\
& - \frac{1}{n}\sum_{i'} x_1 y_{i'} L_{1,i'} + \frac{1}{n(n-1)}\sum_{i'} \sum_{z\neq 1} x_z y_{i'} L_{z,i'} \,.
 \end{align*}
Since the expressions in the last line do not depend on $I$, we conclude that $\var(S_1-S_0)=\var(Z)$
where
\[Z = x_1 y_I L_{1,I} -  \frac{1}{n-1}\sum_{z\neq 1} x_z y_I L_{z,I}\quad\mbox{with $I$ uniform on $[n]$}\,.\]
Estimating $\var(Z)$ requires extra care due to the indicators $L_{i,i'}$. There are two possible cases:
\begin{enumerate}[(i)]
  \item If $L_{1,I}=1$ then by our ordering of the $x_i$'s according to decreasing absolute values we have $L_{z,I}=1$ for all
  $z \geq 1$ and so
  \[ Z = x_1 y_I L_{1,I} -  \frac{1}{n-1}\sum_{z\neq 1} x_z y_I L_{z,I} = \bigg(x_1 - \frac1{n-1}\sum_{z\neq 1} x_z\bigg) y_I\,.\]
  Moreover, $|x_1|$ is at least the average of the $|x_z|$'s for $z > 1$, and so in this case
  \[ Z^2 \leq 4 x_1^2 y_I^2 L_{1,I}\,.\]

  \item Otherwise, $L_{1,I}=0$ and there exists some $T=T(I) > 1$ such that $L_{z,I}=1$ for all $z \geq T$. In this case
  \[ Z = -\frac{1}{n-1} \sum_{z\geq T} x_z y_I\,.\]
  By Cauchy-Schwartz, in this case we thus have
  \[ Z ^2 \leq \frac{1}{n-1} \sum_{z > 1} x_z^2 y_I^2 L_{z,I}\,.\]
\end{enumerate}
Combining the cases, since $I$ is uniform on $[n]$ it now follows that
\[ \var(Z) \leq \E Z^2 \leq \frac4n  \sum_{i'} x_1^2 y_{i'}^2 L_{1,i'} + \frac1{n(n-1)}\sum_{z>1}\sum_{i'} x_z^2 y_{i'}^2 L_{z,i'}\,.\]

Applying the same analysis to a general $t\in[q]$, while assuming without loss of generality
that the remaining unmatched $y_{i'}$'s are $\{y_t,\ldots,y_n\}$, yields
\begin{align}
  \var\left(S_t-S_{t-1}\mid\cF_{t-1}\right) &\leq \frac4{n-t+1} \sum_{i' \geq t} x_t^2 y_{i'}^2 L_{t,i'} \nonumber\\
  &+ \frac1{(n-t+1)(n-t)}\sum_{z>t} \sum_{i' \geq t} x_z^2 y_{i'}^2 L_{z,i'}\,.  \label{eq-Var-St}
\end{align}
At this point our assumption that $q \leq n/2$ due to the fact that $x$ is sparse plays its important role.
For some $z,i'$ consider the total coefficient of $x_z^2 y_{i'}^2 L_{i,i'}$ after summing \eqref{eq-Var-St} over $t =1,\ldots,q$.
The first expression in \eqref{eq-Var-St} contributes at most $4/(n-q+1) \leq 8/n$
whereas the second one adds up to
\[ \sum_{t \leq q} \frac1{(n-t+1)(n-t)} \leq \frac2 n\,.\]
Altogether we conclude that
\[ \sum_{t=1}^q \var\left(S_t-S_{t-1}\mid\cF_{t-1}\right) \leq \frac{10}{n} \sum_{i,i'}x_i^2 y_{i'}^2 L_{i,{i'}}\,\]
and by extending this analysis to all $md/2$ pairs of connected fibers we get
\begin{align}\label{eq-total-var-St}
  \sum_{t} \var(S_t - S_{t-1} \given \cF_{t-1}) &\leq \frac{10}n \sum_{j \sim j'} \sum_{i,i'} x_{ij}^2 y_{i'j'}^2 \one_{\{|x_{ij}y_{i'j'}|<\lambda/mn\}} \nonumber\\
  &\leq   500\frac{\lambda^2 \log d}{mn}\,,
\end{align}
where the last step was by Lemma~\ref{lem-variance-Rl}.
We can now apply the following large deviation inequality for martingales, which is a special case of a result of
Freedman~\cite{Freedman} (see also \cite{McDiarmid} for a variant of this inequality).
\begin{theorem}
Let $(S_i)_{i=0}^n$ be a martingale with respect to a filter $(\cF_i)$ and let $\Delta_i = S_{i}-S_{i-1}$ denote its increments. Suppose that $|\Delta_i| \leq M$ for all $i$
and that $\sum_{i=1}^n \var(\Delta_i \mid\cF_{i-1}) \leq \sigma^2$. Then for any $s > 0$ we have
\[ \P\left(|S_t - S_0| \geq s\mbox{ for some $t\in[n]$}\right)\leq 2\exp\left[-\frac{s^2}{2(\sigma^2+Ms)}\right]\,.\]
\end{theorem}
In our case by \eqref{eq-st-diff-L-inf} and \eqref{eq-total-var-St} we have $M = 4\frac{\lambda}{mn}$ and $\sigma^2 = 500\frac{\lambda^2\log d}{mn}$. Since the final value of $(S_t)$ is $R_l(x,y)$ whereas $S_0 = \E[R_l(x,y)]$, we now get
\begin{align*}
\P\left(\left|R_l(x,y) - \E [R_l(x,y)]\right| \geq s\right) &\leq 2\exp\bigg(-\frac{s^2 mn}{8\lambda(125\lambda\log d + s)}\bigg)\,.\end{align*}
In particular, for $s = a\lambda\log d$ with $a \geq 125$ we have $8\lambda(125\lambda\log d + s) \leq 16\lambda s$ and so
\begin{align*} \P\left(\left|R_l(x,y) - \E [R_l(x,y)]\right| \geq s\right) &\leq
2\exp\left(-\frac{s mn}{16 \lambda}\right) \\ &= 2d^{-a mn/(16 \ln 2)} < 2d^{-amn/12} \,,\end{align*}
as required.
\end{proof}
As a corollary, we can now infer the concentration result of Lemma~\ref{lem-2-fixed-sparse} without requiring that $x,y$ should be sparse.
\begin{corollary}\label{cor-2-fixed}
Let $x,y\in\R^{mn}$ be two fixed vectors with $\|x\|\leq 1$ and $\|y\|\leq 1$.
Let $G$ be an $(m,d,\lambda)$-graph for $d\geq 3$ and $\lambda \geq \sqrt{d}$ and let $H$ be a random $n$-lift of $G$.
 Then for any $a \geq 250$,
\[ \P\left(\left|R_l(x,y)-\E [R_l(x,y)]\right| > a \lambda \log d \right) \leq 2 d^{-amn/24}\,.\]
\end{corollary}
\begin{proof}
Let $x,y\in\R^{mn}$ satisfy $\|x\|\leq 1$ and $\|y\|\leq 1$.
Define the sparse vectors $x',x''\in\R^{mn}$ by
\[ x'_{ij} = \left\{\begin{array}
  {ll} x_{ij} & i \leq n/2 \\
  0 & i > n/2
\end{array}\right.\quad\mbox{and}\quad x''_{ij} = \left\{\begin{array}
  {ll}   0 & i > n/2 \\ x_{ij} & i \leq n/2
\end{array}\right.\,\mbox{ for all $i,j$}\,.\]
Since for every $i,j$ we have that $x_{ij}$ is precisely one of $\{x'_{ij},x''_{ij}\}$ while the other is $0$, we deduce that
\begin{equation}
  \label{eq-Rl-sparse-partition}
  R_l(x,y) = R_l(x',y) + R_l(x'',y)\,.
\end{equation}
Clearly, by the triangle inequality if $|R_l(x,y)-\E R_l(x,y)| \geq a \lambda\log d$ then at least one of the variables $R_l(x',y), R_l(x'',y)$ must deviate from its mean by at least $\frac12 a\lambda\log d$.
For each of the pairs $x',y$ and $x'',y$ we may apply Lemma~\ref{lem-2-fixed-sparse} for a choice of $a/2 \geq 125$ and obtain that
\begin{align*}
\P\left(\left|R_l(x',y)-\E [R_l(x',y)]\right| > (a/2) \lambda \log d \right) &\leq 2d^{-amn/24}\,,
\end{align*}
and the same applies to $x''$. The required result immediately follows.
\end{proof}
To carry the result from the above corollary to every pair of vectors in the lattice $\cL = \big(\frac{1}{d\sqrt{mn}}\Z\big)^{mn}$ we need the following simple claim:
\begin{claim}
There are at most $(4\sqrt{2} d)^{mn}$ vectors $x\in\cL$ such that $\|x\| \leq 1$.
\end{claim}
\begin{proof}
Let $T = \{ x \in \cL : \|x\| \leq 1\}$, set $r = (d+1)/d$ and consider $B_0(r)$, the $mn$-dimensional ball centered at $0$ with radius $r$. For each $x \in T$, define the set
\[Z_x = \{z : x_{ij} < z_{ij} < x_{ij} + 1/(d\sqrt{mn})\}\,.\]
Clearly, each $z \in Z_x$ satisfies $\|z\| \leq \|x\| + \sqrt{mn / (d\sqrt{mn})^2} = (d+1)/d$,
and so $Z_x \subset B_0(r)$. Furthermore, for any $x\neq y\in \cL$ we have $Z_x \cap Z_y =\emptyset$, and
altogether, if we let $\vol(\cdot)$ denote the Lebesgue measure on $\R^{mn}$ then
\[ \sum_{x \in T} \vol(Z_x) \leq \vol(B_0(r)) \leq \frac{(\pi r^2)^{mn/2}}{\lfloor mn/2\rfloor!}
\leq \bigg(\sqrt{2\pi\mathrm{e}}\frac{d+1}{d\sqrt{mn}}\bigg)^{mn}
\,,\]
with the last inequality following from the fact that $k! \geq (k/\mathrm{e})^k$ for all $k$.
Since for every $x$ we have $\vol(Z_x) = (d\sqrt{mn})^{-mn}$, we now deduce that
\[ |T| \leq \left(\sqrt{2\pi\mathrm{e}}(d+1)\right)^{mn}\,.\qedhere\]
\end{proof}
Applying Corollary~\ref{cor-2-fixed} to all $x,y\in T$ with a choice of $a=250$ and then taking a union bound over all $|T|^2$ possible pairs now completes the proof of the Theorem~\ref{thm-light}.
\end{proof}
\section{The second eigenvalue of a random lift}\label{sec:mainproof}

\begin{proof}[\textbf{\emph{Proof of Theorem~\ref{thm-1}}}]
First, note that since $\lambda$ is an upper bound on all nontrivial eigenvalues in absolute value, we can always increase it and take $\lambda \geq \sqrt{d}$, and this would not effect the result (recall that the bound we target for is $(\lambda \vee \rho)\log \rho$ where $\rho=2\sqrt{d-1}$). In this case, as $\rho = 2\sqrt{d-1}$, it suffices to show that every nontrivial eigenvalue of $G$ is $O(\lambda \log d)$ except with probability $O(n^{-100})$.

The following lemma establishes the expected value of $x^\tr A_H\, y$ for any two unit vectors orthogonal to the all-ones vector.
\begin{lemma}\label{lem-expected-x^T-A-y}
  Let $G$ be an $(m,d,\lambda)$-graph and $H$ be a random $n$-lift of $G$.
Let $x,y\in\R^{mn}$ satisfy $\left<x,\allone\right>=\left<y,\allone\right>=0$ and $\|x\|=\|y\|=1$.
Then $\E [x^\tr A_H\, y] \leq \lambda$.
\end{lemma}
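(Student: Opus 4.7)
The plan is to compute $\E[A_H]$ explicitly, reduce the bilinear form on the lift to a bilinear form on the base graph, and then apply the spectral gap of the base graph directly.

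First I would note that because the bijection attached to each edge $jj' \in E(G)$ is uniformly random and independent of the others, for any $(i,j),(i',j') \in V(H)$ we have
\[ \E[(A_H)_{(i,j),(i',j')}] = \tfrac{1}{n}\, \one_{\{jj'\in E(G)\}}\,. \]
In tensor notation this says $\E[A_H] = \tfrac{1}{n}\, J_n \otimes A_G$, where $J_n$ is the $n\times n$ all-ones matrix (and we identify $\R^{mn}$ with $\R^n \otimes \R^m$ in the natural way).

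Next I would introduce the fiber averages $\bar x_j = \sum_i x_{ij}$ and $\bar y_{j'} = \sum_{i'} y_{i'j'}$, viewed as vectors in $\R^m$. Using the factorization of $J_n \otimes A_G$ the expectation becomes
\[ \E[x^\tr A_H\, y] \;=\; \tfrac{1}{n}\sum_{j,j'} (A_G)_{jj'} \Big(\sum_i x_{ij}\Big)\Big(\sum_{i'} y_{i'j'}\Big) \;=\; \tfrac{1}{n}\,\bar x^\tr A_G\,\bar y\,. \]
The orthogonality assumptions $\langle x,\allone\rangle = \langle y,\allone\rangle = 0$ translate immediately into $\langle \bar x,\allone_m\rangle = \langle \bar y,\allone_m\rangle = 0$, i.e., $\bar x$ and $\bar y$ are orthogonal to the Perron eigenvector of $A_G$.

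Finally, since $G$ is an $(m,d,\lambda)$-graph, the spectral bound gives
\[ |\bar x^\tr A_G\,\bar y| \;\leq\; \lambda\,\|\bar x\|\,\|\bar y\|\,, \]
and by Cauchy--Schwartz applied fiberwise,
\[ \|\bar x\|^2 \;=\; \sum_j \Big(\sum_i x_{ij}\Big)^2 \;\leq\; n\sum_{i,j} x_{ij}^2 \;=\; n\,, \]
and likewise $\|\bar y\|^2 \leq n$. Combining these three estimates with the expression for $\E[x^\tr A_H y]$ gives $|\E[x^\tr A_H\, y]| \leq \tfrac{1}{n}\cdot \lambda\cdot n = \lambda$, as claimed. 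There is no real obstacle here; the only point meriting a moment of care is the algebraic identification of $\E[A_H]$ with $\tfrac{1}{n}J_n\otimes A_G$, which is what allows the reduction to a bilinear form on $G$ evaluated at vectors that are automatically orthogonal to $\allone_m$.
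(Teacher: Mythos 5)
Your proof is correct and follows essentially the same route as the paper's: both compute the expectation by averaging over the uniform matchings to reduce $\E[x^\tr A_H y]$ to $\tfrac1n \bar x^\tr A_G \bar y$ with $\bar x,\bar y$ the fiber-sum vectors, then apply the spectral bound of $G$ together with the orthogonality of $\bar x,\bar y$ to $\allone_m$ and the Cauchy--Schwarz estimate $\|\bar x\|^2 \leq n$. The only cosmetic difference is that you phrase the reduction via $\E[A_H]=\tfrac1n J_n\otimes A_G$, whereas the paper just writes out the double sum directly.
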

\begin{proof}
Clearly, since the $n$-lift is comprised of a uniform perfect matching between any two fibers that are adjacent in $G$, we have
\begin{align*}
\E x^\tr A_H \, y &= \E \sum_{ij \sim i'j'} x_{ij}y_{i'j'} = \frac1n \sum_{j \sim j'} \sum_{i,i'}
x_{ij} y_{i'j'} \\
&= \frac1n \sum_{j \sim j'} \sum_i x_{ij} \sum_{i'} y_{i'j'} =
\frac1n w^\tr A_G \, z\,,
\end{align*}
where $w,z\in\R^m$ are defined by $w_j = \sum_i x_{ij}$ and $z_{j'} = \sum_{i'} y_{i'j'}$ for $j\in[m]$.
The assumptions on $x,y$ give that
\begin{align*} \sum_j w_j = \left<x,\allone\right> = 0\,,\quad&
\sum_{j'} z_{j'} = \left<y,\allone\right> = 0\,,
\end{align*}
and furthermore, by Cauchy-Schwartz,
\[ \|w\|^2  = \sum_j \Big(\sum_i x_{ij}\Big)^2 \leq \sum_j n \sum_i x_{ij}^2 = n \|x\|^2 = n\,,\]
and similarly $\|z\|^2 \leq n$.
Altogether, as $w,z$ are orthogonal to the trivial eigenvector, and since every nontrivial eigenvalue of $A_G$ is at most $\lambda$ in absolute value,
\[ \E x^\tr A_H \, y = \frac1n w^\tr A_G\, z \leq \frac1n \lambda \|w\| \|z\| \leq \lambda\,,  \]
as required.
\end{proof}
To prove Theorem~\ref{thm-1}, assume the events described in Theorem~\ref{thm-heavy} and Theorem~\ref{thm-light} occur.
Let $x,y\in\R^{mn}$ satisfy $\left<x,\allone\right>=\left<y,\allone\right>=0$ and $\|x\|\leq 1$, $\|y\| \leq 1$.
Consider $\tilde{x},\tilde{y}$, the closet vectors to $x,y$ respectively among all vectors in $\{z \in \cL : \|z\|\leq 1\}$,
where $\cL = \big(\frac{1}{d\sqrt{mn}}\Z\big)^{mn}$.
Theorem~\ref{thm-heavy} and Theorem~\ref{thm-light} now give that
\begin{align*}
 | R_h(\tilde{x},\tilde{y}) - \E [R_h(\tilde{x},\tilde{y})] | &\leq 7000 \, \lambda \log d\,,\\
 | R_l(\tilde{x},\tilde{y}) - \E [R_l(\tilde{x},\tilde{y})] | &\leq 250 \, \lambda \log d\,,
\end{align*}
and since $\tilde{x}^\tr A_H\, \tilde{y} = R_l(\tilde{x},\tilde{y}) + R_h(\tilde{x},\tilde{y})$ by definition, we get
\begin{align*}
\left|\tilde{x}^\tr A_H\, \tilde{y} \right| &\leq \left|\tilde{x}^\tr A_H\, \tilde{y} - \E\left[\tilde{x}^\tr A_H\, \tilde{y}\right]\right| + \left|\E\left[\tilde{x}^\tr A_H\, \tilde{y}\right]\right| \\
&\leq \left|R_l(\tilde{x},\tilde{y}) - \E [R_l(\tilde{x},\tilde{y})]\right|
 + \left|R_h(\tilde{x},\tilde{y}) - \E [R_h(\tilde{x},\tilde{y})]\right| + \left|\E\left[\tilde{x}^\tr A_H\, \tilde{y}\right]\right|  \\
 &\leq \lambda+7250 \,\lambda \log d\,.
\end{align*}
Finally, by the definition of the lattice $\cL$, both $x' = x -\tilde{x}$ and $y' = y-\tilde{y}$ satisfy
\[ \|x'\|_\infty \leq \frac{1}{d\sqrt{mn}}\,,\quad \|y'\|_\infty \leq \frac{1}{d\sqrt{mn}}\,,\]
and so $\|x'\| \leq 1/d$ and $\|y'\| \leq 1/d$.
Therefore, for instance,
\[ \left|{x'}^\tr A_H \, \tilde{y}\right| \leq \|x'\| \|A_H\;\tilde{y}\| \leq \frac{1}d\cdot d = 1\,,\]
and similarly\ $ \left|{\tilde{x}}^\tr A_H \, y'\right| \leq 1$ and $ \left|{x'}^\tr A_H \, y'\right| \leq 1/d$.
Combining these inequalities, it now follows that
\begin{align*}
\left|x^\tr A_H\, y \right| &\leq \left|\tilde{x}^\tr A_H\, \tilde{y}\right| +
\left|{x'}^\tr A_H \, \tilde{y}\right|  + \left|{\tilde{x}}^\tr A_H \, y'\right| + \left|{x'}^\tr A_H \, y'\right|
\\
&\leq \left|\tilde{x}^\tr A_H\, \tilde{y}\right| + 3 < 7500 \, \lambda \log d \,,
\end{align*}
completing the proof.
\end{proof}

\begin{bibdiv}
\begin{biblist}

\bib{Alon}{article}{
   author={Alon, N.},
   title={Eigenvalues and expanders},
   journal={Combinatorica},
   volume={6},
   date={1986},
   number={2},
   pages={83--96},
}

\bib{AM}{article}{
   author={Alon, N.},
   author={Milman, V. D.},
   title={$\lambda\sb 1,$ isoperimetric inequalities for graphs, and
   superconcentrators},
   journal={J. Combin. Theory Ser. B},
   volume={38},
   date={1985},
   number={1},
   pages={73--88},
}


\bib{AS}{book}{
  author={Alon, Noga},
  author={Spencer, Joel H.},
  title={The probabilistic method},
  edition={3},
  publisher={John Wiley \& Sons Inc.},
  place={Hoboken, NJ},
  date={2008},
  pages={xviii+352},
}

\bib{AL1}{article}{
   author={Amit, Alon},
   author={Linial, Nathan},
   title={Random graph coverings. I. General theory and graph connectivity},
   journal={Combinatorica},
   volume={22},
   date={2002},
   number={1},
   pages={1--18},
}

\bib{AL2}{article}{
   author={Amit, Alon},
   author={Linial, Nathan},
   title={Random lifts of graphs: edge expansion},
   journal={Combin. Probab. Comput.},
   volume={15},
   date={2006},
   number={3},
   pages={317--332},
}

\bib{BL}{article}{
   author={Bilu, Yonatan},
   author={Linial, Nathan},
   title={Lifts, discrepancy and nearly optimal spectral gap},
   journal={Combinatorica},
   volume={26},
   date={2006},
   number={5},
   pages={495--519},
}

\bib{BS}{article}{
  title={On the second eigenvalue of random regular graphs},
  author={Broder, Andrei},
  author={Shamir, Eli},
  conference={
      title={Proc. of the 28th Annual Symposium on the Foundations of Computer Science},
      year={1987},
  },
  volume={},
  number={},
  pages={286--294},
}


\bib{Dodziuk}{article}{
   author={Dodziuk, Jozef},
   title={Difference equations, isoperimetric inequality and transience of
   certain random walks},
   journal={Trans. Amer. Math. Soc.},
   volume={284},
   date={1984},
   number={2},
   pages={787--794},
}


\bib{HLW}{article}{
  author={Hoory, Shlomo},
  author={Linial, Nathan},
  author={Wigderson, Avi},
  title={Expander graphs and their applications},
  journal={Bull. Amer. Math. Soc.},
  volume={43},
  date={2006},
  number={4},
  pages={439--561},
}

\bib{Freedman}{article}{
   author={Freedman, David A.},
   title={On tail probabilities for martingales},
   journal={Ann. Probability},
   volume={3},
   date={1975},
   pages={100--118},
}

\bib{Friedman08}{article}{
  author={Friedman, Joel},
  title={A proof of Alon's  second eigenvalue conjecture and related problem},
  journal={Mem. Amer. Math. Soc.},
  volume={195},
  date={2008},
  number={910},
}

\bib{Friedman1}{article}{
   author={Friedman, Joel},
   title={Relative expanders or weakly relatively Ramanujan graphs},
   journal={Duke Math. J.},
   volume={118},
   date={2003},
   number={1},
   pages={19--35},
}

\bib{Friedman2}{article}{
   author={Friedman, Joel},
   title={Some geometric aspects of graphs and their eigenfunctions},
   journal={Duke Math. J.},
   volume={69},
   date={1993},
   number={3},
   pages={487--525},
}

\bib{FKS}{article}{
  author    = {Friedman, Joel},
  author    = {Kahn, Jeff},
  author    = {Szemer{\'e}di, Endre},
  title     = {On the Second Eigenvalue in Random Regular Graphs},
  conference = {
    title    = {Annual ACM Symposium on Theory of Computing (STOC 1989)},
  },
  pages     = {587--598},
}

\bib{Greenberg}{article}{
    author = {Greenberg, Y.},
    title  = {On the Spectrum of Graphs and their Universal Coverings},
    journal = {PhD thesis, Hebrew University of Jerusalem},
    year = {1995},
    note = {(in Hebrew)},
}

\bib{LP}{article}{
   author={Linial, Nati},
   author={Puder, Doron},
   title={Word maps and spectra of random graph lifts},
   journal={Random Structures and Algorithms},
   status={to appear},
}

\bib{LS}{article}{
    author = {Lubetzky, Eyal},
    author = {Sly, Allan},
    title = {Cutoff phenomena for random walks on random regular graphs},
    journal={Duke Math. J.},
    status = {to appear},
}

\bib{Lubotzky}{book}{
   author={Lubotzky, Alexander},
   title={Discrete groups, expanding graphs and invariant measures},
   series={Progress in Mathematics},
   volume={125},
   note={With an appendix by Jonathan D. Rogawski},
   publisher={Birkh\"auser Verlag},
   place={Basel},
   date={1994},
   pages={xii+195},
}


\bib{LPS}{article}{
  author={Lubotzky, Alex},
  author={Phillips, Ralph},
  author={Sarnak, Peter},
  title={Ramanujan graphs},
  journal={Combinatorica},
  volume={8},
  date={1988},
  number={3},
  pages={261--277},
}

\bib{Margulis1}{article}{
   author={Margulis, G. A.},
   title={Explicit constructions of expanders},
   language={Russian},
   journal={Problemy Pereda\v ci Informacii},
   volume={9},
   date={1973},
   number={4},
   pages={71--80},
}

\bib{Margulis2}{article}{
   author={Margulis, G. A.},
   title={Explicit group-theoretic constructions of combinatorial schemes
   and their applications in the construction of expanders and
   concentrators},
      journal={Problems Inform. Transmission},
   volume={24},
   date={1988},
   number={1},
      pages={39--46},
}

\bib{McDiarmid}{article}{
   author={McDiarmid, Colin},
   title={Concentration},
   conference={
      title={Probabilistic methods for algorithmic discrete mathematics},
   },
   book={
      series={Algorithms Combin.},
      volume={16},
      publisher={Springer},
      place={Berlin},
   },
   date={1998},
   pages={195--248},
}

\bib{Nilli}{article}{
   author={Nilli, A.},
   title={On the second eigenvalue of a graph},
   journal={Discrete Math.},
   volume={91},
   date={1991},
   number={2},
   pages={207--210},
}

\bib{Pinsker}{article}{
  title={On the complexity of a concentrator},
  author={Pinsker, M.},
  journal = {Proc. of the 7th International Teletraffic Conference},
  date={1973},
  pages={318/1--318/4},
}

\bib{RVW}{article}{
   author={Reingold, Omer},
   author={Vadhan, Salil},
   author={Wigderson, Avi},
   title={Entropy waves, the zig-zag graph product, and new constant-degree
   expanders},
   journal={Ann. of Math.},
   volume={155},
   date={2002},
   number={1},
   pages={157--187},
}

\bib{Sarnak}{article}{
   author={Sarnak, Peter},
   title={What is$\dots$an expander?},
   journal={Notices Amer. Math. Soc.},
   volume={51},
   date={2004},
   number={7},
   pages={762--763},
}


\bib{JS}{article}{
   author={Sinclair, Alistair},
   author={Jerrum, Mark},
   title={Approximate counting, uniform generation and rapidly mixing Markov chains},
   journal={Inform. and Comput.},
   volume={82},
   date={1989},
   number={1},
   pages={93--133},
}

\bib{Wormald}{article}{
   author={Wormald, N. C.},
   title={Models of random regular graphs},
   conference={
      title={Surveys in combinatorics, 1999 (Canterbury)},
   },
   book={
      series={London Math. Soc. Lecture Note Ser.},
      volume={267},
      publisher={Cambridge Univ. Press},
      place={Cambridge},
   },
   date={1999},
   pages={239--298},
}

\end{biblist}
\end{bibdiv}
\end{document}